\DeclareRobustCommand
\tikzset{rededge/.style={densely dotted, ultra thick, color=red},
        blueedge/.style={very thick, color=blue},
        vertex/.style={circle, inner sep=1pt, fill=black!100, draw}}
\newcounter{case}
\numberwithin{figure}{section}
\numberwithin{equation}{section}
\newtheorem{thm}{Theorem}
\newtheorem{cor}[thm]{Corollary}
\newtheorem{lmm}[thm]{Lemma}
\newtheorem{proposition}[thm]{Proposition}
\newtheorem{rmk}[thm]{Remark}
\newtheorem{dfn}{Definition}
\newtheorem{notation}[dfn]{Notation}
\newtheorem{clm}{Claim}[case]
\DeclareDocumentCommand \To{o} {
  \IfNoValueTF {#1} {
    \shortrightarrow%
  }{
    \mathrel{\raisebox{-3pt}{$\overset{#1}{\rightsquigarrow}$}}%
  }%
}
\newcommand{\Sleq}[1]{S_{\leq #1}}
\newcommand{\Sgt}[1]{S_{> #1}}
\newcommand{\Sgeq}[1]{S_{\geq #1}}
\newcommand{\Slt}[1]{S_{< #1}}
\DeclareMathOperator{\cp}{cp}
\begin{document}

\title{Partitioning 2-edge-coloured bipartite graphs into monochromatic cycles}

\author{Fabrício Siqueira Benevides\thanks{Departamento de Matemática, Universidade Federal do Ceará, Brazil (\url{fabricio@mat.ufc.br})}
\thanks{This work was funded by the Conselho Nacional de Desenvolvimento Científico e Tecnológico (CNPq), FAPESB (EDITAL FAPESB N. 012/2022 Universal N.APP0044/2023) and Coordenação de Aperfeiçoamento de Pessoal de Nível Superior (CAPES) – Brasil  (Finance Code 001).}
\and Arthur Lima Quintino\footnotemark[1]
\and \underline{Alexandre Talon}\thanks{Univ. Grenoble Alpes, CNRS, Grenoble INP, G-SCOP, Grenoble, France (\url{alexandre.talon@univ-grenoble.fr})}}

\maketitle

\begin{abstract}
Given an $r$-edge-colouring of the edges of a graph $G$, we say that it can be partitioned into $p$ monochromatic cycles when there exists a set of $p$ vertex-disjoint monochromatic cycles covering all the vertices of $G$. In the literature of this problem, an edge and a single vertex both count as a cycle.
    
We show that for every $2$-colouring of the edges of a complete balanced bipartite graph, $K_{n,n}$, it can be partitioned into at most 4 monochromatic cycles. This type of question was first studied in 1970 for complete graphs and in 1983, by Gyárfás and Lehel, for $K_{n,n}$. In 2014, Pokrovskiy showed for all $n$ that, given any $2$-colouring of its edges, $K_{n,n}$ can be partitioned into at most three monochromatic paths. It turns out that finding monochromatic cycles instead of paths is a natural question that has also been raised for other graphs. In 2015, Schaudt and Stein showed that 14 cycles are sufficient for sufficiently large $2$-edge-coloured $K_{n,n}$.
\end{abstract}

\section{Introduction}\label{sec-intro}

Throughout this paper, an $r$-colouring of a graph $G$ is a colouring of its \textbf{edges} with up to $r$ different colours. For such an $r$-colouring we say that \emph{the coloured graph is partitioned into $p$ monochromatic cycles} when there exist a set of $p$ vertex-disjoint monochromatic cycles covering all the vertices of~$G$. We are interested in the \emph{cycle partitioning number of $G$}, that is the smallest number $\cp_r(G)$ such that, for every $r$-colouring of $G$, it can be partitioned into at most $\cp_r(G)$ monochromatic cycles. In the literature of this problem, an edge and a single vertex both count as a cycle.\\

In the 1970's, Lehel made an influential conjecture stating that when the edges of a complete graph $K_n$ are coloured with two colours, it can be partitioned into at most two monochromatic cycles of different colours (but note that the definition of $\cp_r(G)$ does not require the cycles to have distinct colours). {\L}uczak, Rödl and Szemerédi~\cite{lczk-rdl-szmrd98} confirmed Lehel’s conjecture for sufficiently large complete graphs, after some preliminary work of Gyárfás~\cite{grfs83}. This was later improved by Allen~\cite{alln08}, who also gave a proof that there exists some $n_0$ such that Lehel’s conjecture is true for $n\ge n_0$, but avoiding the use of the regularity method and, therefore, providing a smaller value for $n_0$. Allen’s proof also gives a polynomial time algorithm to find the two cycles. In 2010, Bessy and Thomassé~\cite{bss-tmss10} proved it for every complete graph, with a surprisingly short proof. To our knowledge, for a fixed $r$ the best upper bound is $\cp_r(K_n) \le 100r \log r$, for $n$ large enough (see~\cite{grfs-rsznk-skz-szmrd06.1}), while it was conjectured by Erd{\H{o}}s, Gy{\'a}rf{\'a}s and Pyber \cite{erds-grfs-pbr91} that $\cp_r(K_n) \le r$. However, Pokrovskiy \cite{pkvsk14} has shown that this conjecture is false for $r\ge 3$, constructing infinitely many $r$-coloured complete graphs that cannot be vertex-partitioned into $r$ monochromatic cycles, what motivated researchers on working on weaker conjectures about partitioning all but a few number of vertices. In the same article~\cite{pkvsk14}, he proves that for every $3$-colouring of $K_n$, there exists a partition (of all vertices) into at most $3$ monochromatic \emph{paths}, improving a result from~\cite{grfs-rsznk-skz-szmrd11}. 

In the 2-colour case, if we replace cycles by paths, the problem becomes much easier. As noted by Gerencsér and Gyárfás \cite{grcsr-grfs67} in 1967: \emph{the vertex set of any 2-coloured complete graph can be partitioned into two paths of different colours}. To prove it, take any red-blue colouring of $K_n$ and suppose that $R$ and $B$ are vertex disjoint red and blue paths, with respective endpoints $r$ and $b$. Let $v \not\in V(R)\cup V(B)$. If $vr$ is red or $vb$ is blue then we can extend $R$ or $B$ accordingly. Otherwise, without loss of generality $rb$ is red and $\{R\cup rb \cup bv, B \setminus b\}$ is a pair of red-blue paths covering one extra vertex. Together with the previous paragraph, this shows that although the problems for cycles and paths may look similar, they have very different difficulties and outcomes.

The one with cycles has also been studied. Denoting by $\alpha(G)$ the independence number of $G$, Sárközy \cite{sarkozy2011monochromatic} conjectured that $\cp_r(G) \leq r\alpha(G)$ for every $r$-colouring of $G$, while in \cite{balogh2014partitioning} it was proved that for fixed $r$ and $\epsilon>0$, for sufficiently large graphs, one can partition at least $(1-\epsilon)|V(G)|$ of the vertices of an $r$-coloured graph $G$ into at most $r\alpha(G)$ monochromatic cycles. In the same article, motivated by a question raised by Schelp, they also prove that if $r=2$ and $G$ has minimum degree $(1+\epsilon)3|V(G)|/4$, then one can partition a set of at least $(1-\epsilon)|V(G)|$ vertices of $G$ with two monochromatic cycles. This was improved in \cite{debiasio2017monochromatic}: Debasio and Nelsen showed how to absorb the remaining $\epsilon|V(G)|$ vertices. Furthermore, Letzter \cite{letzter2019monochromatic} removed the $\epsilon$ from the bound for the minimum degree. Putting everything together, a $2$-coloured graph with minimum degree at least $3|V(G)|/4$ can be partitioned into two monochromatic paths, proving the conjecture from \cite{balogh2014partitioning}. We refer the reader to a 2016 survey by Gyásfás \cite{grfs16} for other results up to that year. Some results for hypergraphs appear in \cite{gyarfas2013monochromatic} and \cite{stein2023monochromatic} (the later only for paths); and for random graphs in \cite{korandi2018monochromatic}.\\

A very natural extension to the problem is to work on complete multipartite graphs. In particular when $G$ is the balanced complete bipartite graph $K_{n,n}$. In 1983, Gyárfás~\cite{grfs83} stated that every balanced complete bipartite graph with a 2-colouring that is not a split colouring (see \Cref{dfn-splitcolouring}, below) contains two vertex-disjoint monochromatic \emph{paths} that partition all but at most one of its vertices (although the proof actually also appears implicitly in a paper of Gyárfás and Lehel~\cite{grfs-lhl73}, in 1973). This result was later extended by Pokrovskiy (see~\Cref{thm-Knn-simplepath}, below): he managed to cover all vertices.

The first general upper bounds for $\cp_r(K_{n,n})$ were given by Haxell \cite{haxell1997partitioning} and by Peng, Rödl, and Ruci\'nski \cite{png-rdl-rcnsk02}. The current best known result is that $\cp_r(K_{n,n}) \le 4r^2$ if $n$ is large enough (see \cite{lng-stn15}). This improved a result in \cite{cnln-stn16}. For the lower bound, an easy construction shows that we need at least $2r-1$ cycles \cite{pkvsk14,lng-scht-stn15} for a $r$-coloured $K_{r,r}$. Start with one partition class: blow up each vertex in this class to a set of size $r$. In the other partition class blow up only one vertex to a set of size $r(r-1) + 1$. In~\cite{lng-scht-stn15}, Lang, Schaudt and Stein say they believe that $2r-1$ is the correct answer and claim that this was confirmed for $r=2$ for large values of $n$ (by another author who informed them through personal communication). However, this was 9 years ago and a proof was never published (and only holds for large $n$). To our knowledge the best published bound for $r=2$ was given by Schaudt and Stein~\cite{schaudt2019partitioning}: they showed that $\cp_2(K_{n,n}) \le 12$ for sufficiently large values of $n$. In \cite{lng-scht-stn15} it was also proved that when $r=3$ five monochromatic cycles are enough to partition \emph{almost all} vertices of $K_{n,n}$. Finally, \cite{schaudt2019partitioning} also considered the problem for complete $k$-partite graphs $G$ under the optimal condition that $G$ contains at most $|V(G)|/2$ vertices in every partition class. They show that for $r=2$, the vertices of such a $2$-coloured and sufficiently large $G$ can be partitioned into at most two monochromatic paths or at most 14 monochromatic cycles.\\

Here, we show that $\cp_2(K_{n,n}) \le 4$ \textbf{for every} natural number $n$ (see~\Cref{thm-Knn-4cycles}). Therefore $\cp_2(K_{n,n})\in \{3, 4\}$. Before proving it, we shall define the basic concepts that we will use throughout this paper.

\begin{dfn}
We say that a path is \textbf{simple} when it is the union of a blue path, $v_1v_2\ldots v_i$ and a red path, $v_iv_{i+1}\ldots v_k$. We call $v_i$ the \textbf{turning point} of the path. 
\end{dfn}

\begin{dfn}
We say that a bipartite graph is \textbf{balanced} when its partition classes have the same number of vertices. 
\end{dfn}

\begin{dfn}\label{dfn-splitcolouring}
Let $G$ be a bipartite graph with partition classes $X$ and $Y$, whose edges are coloured red and blue. The colouring on $G$ is \textbf{split} when $X$ and $Y$ can each be partitioned into two non-empty sets, $X=X_1\cup X_2$ and $Y=Y_1\cup Y_2$, such that all edges between $X_i$ and $Y_j$ are red for $i=j$ and blue for $i\neq j$ (see Figure~\ref{fig-splitcolouring}).
\end{dfn}

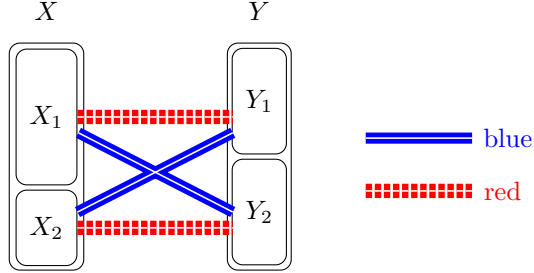
\begin{figure}[!ht]
\centering
\begin{tikzpicture}[scale=1.4]

\tikzset{set1/.style={inner sep=5pt, draw, rounded corners, minimum height=1cm}}
\tikzset{set2/.style={inner sep=5pt, draw, rounded corners, minimum height=1.8cm}}
\tikzset{set3/.style={inner sep=5pt, draw, rounded corners, minimum height=1.4cm}}

\node (x) at (0.5,2.5) {$X$};
\node [set2] (x1) at (0.5,1.5) {$X_1$};
\node [set1] (x2) at (0.5,0.45) {$X_2$};

\node (y) at (2.5,2.5) {$Y$};
\node [set3] (y1) at (2.5,1.65) {$Y_1$};
\node [set3] (y2) at (2.5,0.6) {$Y_2$};

\draw [rounded corners] (0.15,0.05) rectangle (0.85,2.2);
\draw [rounded corners] (2.2,0.05) rectangle (2.8,2.2);

\coordinate (y1red) at ([shift={(-0.25,-0.15)}]y1);
\coordinate (y2red) at ([shift={(-0.25,-0.15)}]y2);
\coordinate (x1blue) at ([shift={(0.3,-0.15)}]x1);
\coordinate (y1blue) at ([shift={(-0.25,-0.3)}]y1);
\coordinate (y2blue) at ([shift={(-0.25,0)}]y2);
\draw [densely dotted, double, line width=2.4pt, color=red] (x1)--(y1red) (x2)--(y2red);
\draw [double, line width=1.8pt, color=blue] (x1blue)--(y2blue) (x2)--(y1blue);

\draw[densely dotted, double, line width=2.4pt, color=red] (3.5, 0.8) -- (4.5,0.8) node[right] {red};
\draw[double, line width=1.8pt, color=blue] (3.5, 1.3) -- (4.5,1.3) node[right] {blue};

\end{tikzpicture}
\caption{A split colouring.}
\label{fig-splitcolouring}
\end{figure}
In 1983, Gyárfás and Lehel~\cite{grfs83, grfs-lhl73} showed that every balanced complete bipartite graph with a 2-colouring that is not split contains two vertex-disjoint monochromatic paths that cover all but at most one of its vertices. This result was later extended by Pokrovskiy.

\begin{thm}[Pokrovskiy \cite{pkvsk14}, 2014]\label{thm-Knn-simplepath}
Let $G$ be a balanced complete bipartite graph whose edges are coloured red and blue. There is a vertex-partition of $G$ into two monochromatic \textbf{paths with different colours} if, and only if, the colouring on $G$ is not split.
\end{thm}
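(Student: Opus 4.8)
The plan is to prove the two directions separately; the ``only if'' direction is short and the ``if'' direction carries the real content.

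For the ``only if'' direction I would argue by contrapositive: assuming the colouring is split, I show that no pair consisting of a red path and a blue path can span $G$. The key structural fact is that in a split colouring the red subgraph is the disjoint union of the two complete bipartite graphs on $X_1\cup Y_1$ and on $X_2\cup Y_2$, while the blue subgraph is the disjoint union of those on $X_1\cup Y_2$ and on $X_2\cup Y_1$. Since a monochromatic path is connected, a red path is confined to one of the two red components and a blue path to one of the two blue components. The union of any one red component with any one blue component omits at least one of the four non-empty sets $X_1,X_2,Y_1,Y_2$, so two such paths cannot cover $V(G)$; and a one-line cardinality check (each such component has at most $2n-2$ vertices) disposes of the degenerate cases where one of the paths is empty or a single vertex, for which the colour label is ambiguous.

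For the ``if'' direction I would use an extremal argument in the spirit of Gerencsér--Gyárfás. Among all pairs $(R,B)$ of vertex-disjoint monochromatic paths with $R$ red and $B$ blue, I choose one covering the maximum number of vertices; let $U$ be the set of uncovered vertices and suppose for contradiction that $U\neq\emptyset$, picking $v\in U$, say $v\in X$. Since $G$ is complete bipartite, $v$ is joined to every vertex of $Y$, so comparing with maximality immediately forces colour constraints on the edges from $v$ to the endpoints of $R$ and $B$ lying in $Y$ (for instance a $Y$-endpoint of $R$ must send a blue edge to $v$, otherwise $R$ extends and covers $v$). The central tool is rotation: in a monochromatic path one may reverse a suffix along a same-coloured chord to relocate an endpoint while preserving both the colour and the covered vertex set, which lets me enlarge the set of reachable endpoints and propagate these colour constraints to large subsets of $Y$.

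The decisive step is a family of ``splicing'' moves: when $R$ has an endpoint in $Y$ and $B$ an endpoint in $X$ (or in a symmetric configuration), I try to join the two paths through the well-coloured edge between these opposite-side endpoints, optionally picking up $v$ along the way, so as to increase the total number of covered vertices. Each such move either yields an improvement, contradicting maximality, or fails only because one specific edge has the ``wrong'' colour, in which case I record that forced colour. I expect the main obstacle to lie precisely here and in the final bookkeeping: one must verify that when every extension, every rotation, and every splicing move fails, the accumulated constraints partition $X$ and $Y$ into non-empty blocks across which all edges are monochromatic in exactly the split pattern of \Cref{dfn-splitcolouring}. Since the hypothesis is that the colouring is \emph{not} split, this is a contradiction, forcing $U=\emptyset$, so the maximum pair is a spanning partition into a red and a blue path. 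A closing remark then handles the trivial values of $n$ and fixes the colour to assign to a path that happens to be a single vertex, ensuring the two paths genuinely receive different colours.
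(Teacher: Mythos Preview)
The compiled paper does not prove this theorem---it is cited from Pokrovskiy and invoked only through \Cref{cor-Knn-pathcycle}---but the \LaTeX\ source contains a commented-out self-contained proof, and that is what I compare against. Your ``only if'' direction is fine and matches the paper (cf.\ \Cref{cor-slit-no-2-cycles-same-color}); the component/cardinality argument is sound.

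For the ``if'' direction, your extremal object differs from the paper's in a way that leaves a genuine gap. The paper maximises not a pair $(R,B)$ of disjoint monochromatic paths but a single \emph{simple path}: one path whose prefix is blue and whose suffix is red, joined at a turning point $x$. The case analysis is then only over which partition classes contain the two endpoints and the turning point; two of the three cases are one-liners, and in the remaining one (both endpoints in one class, turning point in the other) the turning point serves repeatedly as a pivot from which the split structure is read off. Your framing gives up that pivot. Concretely, your splicing move needs an endpoint of $R$ and an endpoint of $B$ in opposite classes, and P\'osa-type rotation cannot move an endpoint across the bipartition (rotating along a chord from $v_k$ to $v_i$ relocates the endpoint to $v_{i+1}$, which lies in the same class as $v_k$). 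Hence in the configuration where all four endpoints of $R$ and $B$ lie in, say, $X$---perfectly consistent with maximality, and forcing an uncovered vertex in $Y$---none of extension, rotation, or splicing applies as you have described them, and you have not said how to continue. This is not the ``final bookkeeping'' you defer; it is the substantive case, and the simple-path device is precisely the idea that organises it.
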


\begin{rmk}\label{cor-slit-no-2-cycles-same-color}
 Let $G$ be a complete bipartite graph with a split colouring of its edges into red and blue. The vertices of $G$ cannot be partitioned into two monochromatic cycles of different colours.
\end{rmk}
\begin{proof} 
By \Cref{thm-Knn-simplepath} (or by a very simple argument), the graph $G$ cannot be partitioned into two paths of different colours. Therefore, it cannot be partitioned into two cycles of different colours either.
\end{proof}

However, for split colourings the following is (trivially) true. We nevertheless include a proof for completeness.

\begin{proposition}\label{lmm-Kn-nsplit-3cycles}
Let $G$ be a balanced complete bipartite graph whose edges are coloured red and blue. If the colouring on $G$ is split, then $G$ can be (vertex-)partitioned into at most $3$ monochromatic cycles. Furthermore, two cycles suffice if and only if $|X_1| = |Y_1|$ or $|X_1|=|Y_2|$.
\end{proposition}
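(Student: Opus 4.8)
The plan is to exploit the rigid structure that a split colouring forces on monochromatic cycles. Writing $a = |X_1|$, $b = |X_2|$, $c = |Y_1|$, $d = |Y_2|$, note that since both parts of $X$ and of $Y$ are non-empty we have $a,b,c,d \ge 1$ and $a+b = c+d = n$ (in particular $n \ge 2$). The first step is the observation that, because every red edge joins $X_1$ to $Y_1$ or $X_2$ to $Y_2$ and every blue edge joins $X_1$ to $Y_2$ or $X_2$ to $Y_1$, any monochromatic cycle with at least one edge lives entirely inside one of the four ``blocks'' $X_1\cup Y_1$, $X_2\cup Y_2$ (red) or $X_1\cup Y_2$, $X_2\cup Y_1$ (blue); moreover, being a cycle of a bipartite graph, it meets its two sides in equal numbers of vertices. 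I would record this as a preliminary claim, since every later argument rests on it.

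For the upper bound of three cycles, I would give an explicit construction. If $a=c$ then two red cycles, one spanning $X_1\cup Y_1$ ($=K_{a,a}$) and one spanning $X_2\cup Y_2$ ($=K_{b,b}$, using $b=d$), already partition $G$. Otherwise assume by symmetry $a>c$, so that $X_1$ carries a surplus of $a-c$ vertices while $Y_2$ carries a matching surplus $d-b=a-c$. I would then take: a red cycle on $c$ vertices of $X_1$ together with all of $Y_1$; a red cycle on all of $X_2$ together with $b$ vertices of $Y_2$; and a blue cycle on the remaining $a-c$ vertices of $X_1$ and the remaining $a-c$ vertices of $Y_2$. Each spans a balanced complete bipartite graph $K_{m,m}$, which has a Hamilton cycle for $m\ge 2$ and is an edge (still a cycle, by convention) for $m=1$; a short check confirms these three are vertex-disjoint and cover $G$.

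For the ``furthermore'' part I would prove the two implications separately. Sufficiency is immediate from the same idea: if $a=c$ the two red cycles above work, and if $a=d$ then two blue cycles, on $X_1\cup Y_2$ ($=K_{a,a}$) and on $X_2\cup Y_1$ (balanced, since $b=c$), work. For necessity, suppose $G$ is partitioned into two monochromatic cycles. The preliminary claim forces each cycle with an edge to touch exactly one of $X_1,X_2$ and exactly one of $Y_1,Y_2$; I would first rule out single-vertex cycles (a single vertex together with one ordinary cycle cannot cover both $X$-blocks and both $Y$-blocks when all four are non-empty), and then argue that to cover $X_1,X_2,Y_1,Y_2$ the two cycles must touch \emph{different} $X$-blocks and \emph{different} $Y$-blocks. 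This leaves exactly two configurations: one cycle on $X_1\cup Y_1$ and one on $X_2\cup Y_2$, or one on $X_1\cup Y_2$ and one on $X_2\cup Y_1$. In the first, the cycle through $X_1$ must cover all of $X_1$ and all of $Y_1$, so the equal-sides property gives $a=c$; in the second it gives $a=d$.

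The routine parts are the disjointness-and-covering bookkeeping in the three-cycle construction. The one place needing a little care, and the main obstacle, is the necessity argument: cleanly excluding the degenerate cycles (single vertices, and the reductions when some block has size $1$) before concluding that the only two admissible block-configurations force $a=c$ or $a=d$.
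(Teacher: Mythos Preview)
Your argument is correct, and for the upper bound and the sufficiency direction it is essentially identical to the paper's: the same three-cycle construction (two red cycles plus one blue ``overflow'' cycle) and the same two-cycle constructions when $a=c$ or $a=d$.

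The one genuine difference is in the necessity direction. The paper invokes its Remark~\ref{cor-slit-no-2-cycles-same-color} (a corollary of Pokrovskiy's Theorem~\ref{thm-Knn-simplepath}) to conclude that the two cycles must share a colour, and then finishes with the block argument. You bypass this entirely: from your preliminary claim that every monochromatic cycle with an edge is confined to a single block $X_i\cup Y_j$, you rule out single-vertex cycles directly (one vertex plus one block-confined cycle cannot meet all four non-empty parts) and then observe that two block-confined cycles covering everything must occupy complementary blocks, forcing $a=c$ or $a=d$. This is more elementary and fully self-contained; it does not rely on Theorem~\ref{thm-Knn-simplepath} at all. The paper's route is shorter on the page because it outsources the work to an earlier result, but your route shows that the ``furthermore'' clause really is an immediate consequence of the split structure alone.
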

\begin{proof}
Let $X$ and $Y$ be the partition classes of $G$. And $X = X_1\cup X_2$, $Y = Y_1\cup Y_2$ as in the definition of split colouring (\Cref{dfn-splitcolouring}). We may assume without loss of generality that $|X_1|\geq |Y_1|$. As $|X|=|Y|$, we have $|X_1|-|Y_1|=|Y_2|-|X_2|$ and, therefore, $|X_2|\leq |Y_2|$. Hence, $G$ can be vertex-partitioned into a red cycle that covers all vertices in $Y_1$ and $|Y_1|$ vertices in $X_1$, a red cycle that covers all vertices in $X_2$ and $|X_2|$ vertices in $Y_2$ and a blue cycle that covers all the remaining vertices in $X_1$ and $Y_2$. And clearly, if $|X_1| = |Y_1|$, then $|X_2| = |Y_2|$ and two cycles (of the same colour) suffice.

On the contrary, suppose that two monochromatic cycles suffice. By \Cref{cor-slit-no-2-cycles-same-color} they must be of the same colour. Suppose, without loss of generality that both are red. Each cycle must be entirely contained in $X_1\cup Y_1$ or $X_2\cup Y_2$. And because all the sets $X_1$, $X_2$, $Y_1$ and $Y_2$ are non-empty, one cycle must cover vertices in $X_1\cup Y_1$ and the other in $X_2\cup Y_2$. And each cycle covers as many vertices in $X$ as in $Y$. Therefore $|X_1| = |Y_1|$ and $|X_2| = |Y_2|$. Note: similarly, blue cycles suffice if and only if $|X_1| = |Y_2|$ and $|X_2| = |Y_1|$.
\end{proof}

We could naturally wonder whether, given any non-split 2-colouring, a balanced bipartite graph can be partitioned into at most 2 monochromatic cycles. In \Cref{prop:ex-almost-best} we give an example of a colouring that is not split and needs at least 3 cycles. 

For upper bounds, using the regularity method, Schaudt and Stein~\cite{schaudt2019partitioning} showed that for every $k$, a 2-coloured sufficiently large $k$-partite graph $G$, such that no partition class of $G$ contains more than half of its vertices, can be partitioned into at most $14$ monochromatic cycles. For $k=2$, they proved that this can be done with at most $12$ monochromatic cycles.

Our main contribution is to reduce this number to only $4$ monochromatic cycles, for all balanced bipartite graphs. Note that our result does not require $G$ to be large.

\begin{thm}\label{thm-Knn-4cycles}
If $G$ is a balanced complete bipartite graph whose edges are coloured red and blue, then $G$ can be partitioned into at most $4$ monochromatic cycles.
\end{thm}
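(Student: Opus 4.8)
The plan is to split on whether the colouring is split. If it is, then \Cref{lmm-Kn-nsplit-3cycles} already yields a partition into at most three monochromatic cycles and there is nothing more to do. Hence I would assume from now on that the colouring is \emph{not} split, and invoke \Cref{thm-Knn-simplepath} to obtain a vertex-partition of $G$ into a red path $R = r_1r_2\cdots r_a$ and a blue path $B = b_1b_2\cdots b_c$, with $a+c = 2n$. The whole problem then reduces to a \emph{closing} statement: any partition of a balanced complete bipartite graph into a red path and a blue path can be refined into a partition into at most four monochromatic cycles. Since a single vertex and a single edge each count as a cycle, the two paths are already ``almost'' two cycles, and the budget of four should let each of them become a cycle plus at most one leftover piece.

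To close a monochromatic path into a monochromatic cycle of its own colour, one needs its two endpoints to lie in opposite classes (so that an edge joins them) and that this edge carry the colour of the path. Because $G$ is bipartite, a path $v_1\cdots v_k$ has its endpoints in opposite classes exactly when $k$ is even. So for each path there are two possible obstructions: a \textbf{parity} obstruction ($k$ odd, the endpoints share a class and cannot be joined at all) and a \textbf{colour} obstruction ($k$ even but the unique closing edge has the wrong colour). Note that $a$ and $c$ have the same parity, since $a+c=2n$. If neither path meets an obstruction, closing both gives two cycles and we are done; all of the work lies in resolving the obstructions.

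I would resolve them with two moves, always staying within the budget $4 = 2+2$. For a parity obstruction I would detach one endpoint of the offending path as a singleton cycle; the shortened path now has even length, and if its new closing edge has the right colour it closes into one cycle, costing that path two cycles in total. For a colour obstruction on, say, the red path, the closing edge $r_1r_a$ is blue; the key observation is that a wrongly coloured closing edge of one path is exactly the \emph{right} colour to be absorbed by the other path, so I would reroute the endpoint $r_1$ (or $r_a$) onto $B$ across a blue connecting edge, or, failing that, peel $r_1r_a$ off as its own blue cycle. Each such repair changes the length parities and the endpoints of the paths, and one must verify that after the repairs every remaining piece is either closeable or already a cycle, and that the total number of pieces never exceeds four. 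I would organise this as a finite case analysis on the common parity of $a$ and $c$, the colours of the (at most four) edges joining the endpoints $r_1,r_a$ to $b_1,b_c$, and the colours of the two closing edges, disposing separately of the degenerate cases where $R$ or $B$ is empty or has at most two vertices.

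The main obstacle is precisely this interleaved closing step. The difficulty is that the two paths cannot be closed independently: a single path may suffer the parity and colour obstructions at once, and any repair applied to one path (detaching an endpoint, or transferring it to the other path) perturbs the length parity and the endpoints of the other, so that repairs can cascade. The crux is therefore to show that the repairs can always be chosen so that they do \emph{not} cascade, i.e.\ that a bounded, in fact constant, number of endpoint adjustments brings both paths into a simultaneously closeable configuration without ever exceeding four pieces. I expect essentially all of the case-checking to live here, whereas the reduction to two monochromatic paths via \Cref{thm-Knn-simplepath} and the disposal of the split case via \Cref{lmm-Kn-nsplit-3cycles} are immediate.
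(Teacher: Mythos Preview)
Your reduction (the split case via \Cref{lmm-Kn-nsplit-3cycles}, then two monochromatic paths of different colours via \Cref{thm-Knn-simplepath}) is correct and matches the paper's. The gap is entirely in the closing step.

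A parity argument of exactly the kind you sketch does close \emph{one} of the two paths into a cycle; this is \Cref{cor-Knn-pathcycle}, and the paper derives it from \Cref{thm-Knn-simplepath} in just this way. But then the remaining path is a monochromatic Hamiltonian path in a smaller balanced complete bipartite graph, and you must partition that subgraph into at most three monochromatic cycles --- which is \Cref{lmm-Knnwithpath-3cycles}, the paper's main technical lemma, whose proof occupies all of Section~\ref{subsec:mainproof}. Your endpoint repairs cannot replace it. Each repair (detach a vertex, peel off a wrongly-coloured closing edge, reroute an endpoint onto the other path) exposes a \emph{new} closing edge one step further in, whose colour is not among the six edges you propose to case-split on; nothing prevents the cascade from continuing. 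Concretely, if $a$ is even and every chord $r_ir_{a+1-i}$ happens to be blue, peeling alone yields $a/2$ blue edge-cycles, and transferring a bounded number of endpoints to $B$ does not change the picture inside $R$. More to the point, \Cref{prop:ex-almost-best} already exhibits a red Hamiltonian path whose span needs three cycles, so your ``one cycle plus one leftover'' heuristic is false; and the three-cycle decompositions the paper actually builds use blue edges $x_iy_j$ with arbitrary indices, found through the global even-plait analysis of \Cref{dfn-evenplait} and \Cref{lmm-Knn-withpath-evenplait}, not through anything visible at the four endpoints. Your plan therefore rediscovers \Cref{cor-Knn-pathcycle} and halts exactly where the real work of the paper begins.
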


\section{Proving \texorpdfstring{\Cref{thm-Knn-4cycles}}{main theorem}}
\label{sub-Kn,n-4cycles}

Thanks to \Cref{lmm-Kn-nsplit-3cycles}, we only have to prove \Cref{thm-Knn-4cycles} when the given colouring is not split. Our proof is self-contained apart from the following result.

\begin{cor}[Stein \cite{stein2023monochromatic}, 2022]\label{cor-Knn-pathcycle}
Let $G$ be a balanced complete bipartite graph whose edges are coloured red and blue. If the colouring on $G$ is not split, then $G$ can be vertex-partitioned into a monochromatic path and a monochromatic cycle with different colours.
\end{cor}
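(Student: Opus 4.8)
The plan is to bootstrap from Pokrovskiy's path result, \Cref{thm-Knn-simplepath}. Since the colouring on $G$ is not split, that theorem gives a vertex-partition of $G$ into a red path $P_R$ and a blue path $P_B$, and the goal is then to \emph{close} one of these two paths into a monochromatic cycle of its own colour while keeping the two resulting objects of different colours. Write $X,Y$ for the partition classes, with $|X|=|Y|=n$, and recall the convention that a single vertex and a single edge each count as a cycle. First I would dispose of the degenerate cases: if either path has at most two vertices it is already a monochromatic cycle (a lone vertex, to which we may assign either colour, or a single edge), so I may assume both $P_R$ and $P_B$ have at least three vertices and hence that any genuine closing cycle must have even length.

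Next I would reduce to a convenient parity configuration. Counting the vertices of each path in $X$ and in $Y$ and using $|X|=|Y|$ shows that, up to swapping $X$ and $Y$, exactly two situations arise: either both paths have even length (so each path has its two endpoints in different classes), or both paths have odd length (so the two red endpoints lie in one class and the two blue endpoints in the other). In the odd case the two endpoints of a single path lie in the same class and cannot be joined, so I would first move one vertex across: taking a red endpoint $a$ and a blue endpoint $b$ in opposite classes, the edge $ab$ exists, and appending it to whichever path matches its colour turns both paths even without destroying the partition. Thus I may assume both $P_R$ and $P_B$ have even length with endpoints in opposite classes; say $P_R$ has endpoints $a_1\in X$, $a_\ell\in Y$ and $P_B$ has endpoints $b_1\in X$, $b_m\in Y$.

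Now the edges $a_1a_\ell$ and $b_1b_m$ both exist. If $a_1a_\ell$ is red I close $P_R$ into a red cycle and keep $P_B$; if $b_1b_m$ is blue I close $P_B$ into a blue cycle and keep $P_R$; in either case we are done. So the whole difficulty concentrates in the single stubborn case where $a_1a_\ell$ is blue and $b_1b_m$ is red. Here I would run Pósa-type rotations along one path to slide its endpoints: if some rotation produces a pair of current endpoints in opposite classes joined by an edge of the path's own colour, I close; and I would also exploit the cross edges $a_\ell b_1$ and $a_1b_m$ to transfer endpoints between the two paths and re-enter a closable configuration.

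The step I expect to be the main obstacle is exactly this stubborn case: showing that a configuration in which no rotation or transfer ever yields a closable monochromatic cycle cannot occur under the non-split hypothesis. The natural way to finish is to argue that a persistent failure to close forces the colours of the edges between the rotation-endpoint sets of the two paths, together with the vertices off the paths, into the rigid red/blue pattern of \Cref{dfn-splitcolouring}, contradicting that the colouring is not split. This is the same flavour of forcing analysis that underlies the split-producing case in the proof of \Cref{thm-Knn-simplepath}, and strengthening the conclusion from \emph{two paths} to \emph{path plus cycle} amounts to carefully tracking, throughout that analysis, precisely when an even monochromatic path can be turned into a cycle.
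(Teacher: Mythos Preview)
Your overall plan---start from Pokrovskiy's two paths, sort out the degenerate and parity cases, and then try to close one path into a cycle---is exactly how the paper (following Stein) says the corollary is obtained. The paper itself gives no details beyond calling it ``a very short case analysis on the parity of the paths given by \Cref{thm-Knn-simplepath}''. Where your proposal diverges is in the handling of what you call the stubborn case, and this is also where your sketch has a genuine gap.

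The stubborn case does not need P\'osa rotations or a split-forcing argument; it dissolves in one line once you make an extremal choice at the outset. Among all partitions of $V(G)$ into a red path $R$ and a blue path $B$ (at least one exists by \Cref{thm-Knn-simplepath}), take one with $|V(B)|$ minimum. If $|V(B)|\le 2$ the blue path is already a cycle and you are done, so assume $|V(B)|\ge 3$. Minimality now forces \emph{every} existing edge between an endpoint of $R$ and an endpoint of $B$ to be blue: a red such edge would let you move a blue endpoint onto $R$, shortening $B$. In the odd-parity case (red endpoints $r_1,r_2\in X$, blue endpoints $b_1,b_2\in Y$) all four edges $r_ib_j$ are therefore blue, so $(r_1,b_1,B,b_2)$ is a blue cycle and $R\setminus r_1$ is the red path. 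In the even-parity case ($r_1,b_1\in X$, $r_2,b_2\in Y$) the two cross edges $r_1b_2$ and $r_2b_1$ are blue; if moreover $r_1r_2$ is blue (your stubborn assumption), then $(r_1,r_2,b_1,B,b_2)$ is a blue cycle and $R\setminus\{r_1,r_2\}$ is the red path. Either way the partition into a blue cycle and a red path is immediate.

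So your instinct that ``closing one path'' is the whole game is correct, but the rotation/transfer machinery you propose is unnecessary and, as you yourself note, you do not actually carry it out. The missing idea is simply to pick the partition extremally; once you do, the cross-edge colours are forced and the stubborn case is no longer stubborn. Attempting instead to show that persistent failure of rotations forces a split colouring would essentially be reproving \Cref{thm-Knn-simplepath} rather than using it.
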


We observe that the above is a corollary of \Cref{thm-Knn-simplepath}, and follows from a very short case analysis on the parity of the paths given by \Cref{thm-Knn-simplepath}. This is done in details in Section 4.1 of~\cite{stein2023monochromatic}.
We also note that Section~4.2 of~\cite{stein2023monochromatic} presents an alternative proof of \Cref{thm-Knn-simplepath} that is also short and self-contained.

In view of \Cref{cor-Knn-pathcycle}, it is enough to prove the following lemma. 

\begin{lmm}\label{lmm-Knnwithpath-3cycles}
Let $G$ be a balanced complete bipartite graph whose edges are coloured red and blue. If $G$ has a monochromatic Hamiltonian path, then $G$ can be vertex-partitioned into at most $3$ monochromatic cycles.
\end{lmm}

In fact, let $H$ be any balanced complete bipartite graph with a 2-colouring of its edges. When we apply \Cref{cor-Knn-pathcycle} to $H$, the monochromatic cycle we obtain must be even because $H$ is bipartite. And since $H$ is balanced, the monochromatic path given by \Cref{cor-Knn-pathcycle} is also even. So its vertices induce a balanced complete bipartite subgraph of $H$ that contains a monochromatic Hamiltonian path.

\Cref{lmm-Knnwithpath-3cycles} is best possible, in the sense that even with the extra condition that the colouring of the bipartite graph contains a Hamiltonian monochromatic path, there are 2-colourings that cannot be partitioned into 2 monochromatic cycles, as illustrated below.

\begin{proposition}\label{prop:ex-almost-best}
Let $G$ be a balanced complete bipartite graph on $2n$ vertices, with bipartition $S = \{x_1, \ldots, x_n\}\cup\{y_1,\ldots,y_n\}$. Take the colouring where:
\begin{itemize}
\item $x_1y_1$, $x_2y_2$ and all edges of the form $x_1y_i$ and $y_1x_i$ for $i \geq 3$ are blue;
\item the edges $x_2y_i$ for $i \geq 4$ are blue;
\item all the other edges are red.
\end{itemize}
Then $G$ contains a Hamiltonian red path and cannot be partitioned into two monochromatic cycles.
\end{proposition}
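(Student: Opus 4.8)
The plan is to treat the two assertions separately, both via the low-degree structure of the colouring. First record the (easily verified) colour-degrees: in red, $x_1$ is adjacent only to $y_2$ and $y_1$ only to $x_2$, $x_2$ is adjacent only to $y_1,y_3$, and each $x_i$ with $i\ge 3$ is red to $\{y_2,\dots,y_n\}$; in blue, $y_2$ is adjacent only to $x_2$, $y_3$ only to $x_1$, and each $x_i$ with $i\ge 3$ only to $y_1$. In particular the red-degree-$1$ vertices are $x_1,y_1$ and the blue-degree-$1$ vertices are $y_2,y_3,x_3,\dots,x_n$. (Throughout I assume $n\ge 3$; for $n\le 2$ there is no red Hamiltonian path, so the statement is vacuous.) For the Hamiltonian red path I would simply exhibit it, namely
\[
x_1,\,y_2,\,x_3,\,y_4,\,x_4,\,y_5,\,\dots,\,x_{n-1},\,y_n,\,x_n,\,y_3,\,x_2,\,y_1,
\]
i.e. $x_1,y_2$, then the pairs $x_i,y_{i+1}$ for $i=3,\dots,n-1$, then $x_n,y_3,x_2,y_1$; every listed edge is red by the adjacency list, and the endpoints $x_1,y_1$ are forced since they have red-degree $1$.

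For the second assertion I would argue by contradiction, assuming a partition of $V(G)$ into two monochromatic cycles. The two facts I would use repeatedly are: (i) a vertex of monochromatic degree $1$ in a colour cannot lie on a genuine (length $\ge 4$) cycle of that colour, so it can only be a singleton or the endpoint of a monochromatic edge; and (ii) a single ``cycle'' object spans $1$, $2$, or an even number $\ge 4$ of vertices, hence never an odd number greater than $1$. I would then split into cases by the colours of the two cycles. In the monochromatic cases, I use $x_1$ (both red) or $x_3$ (both blue): such a vertex must be a singleton, forcing the other cycle to span $2n-1$ vertices (odd, impossible by (ii)), or an edge ($x_1y_2$, resp.\ $x_3y_1$), after which the complementary genuine cycle of the same colour still has to contain another degree-$1$ vertex of that colour ($y_1$ in red; some $x_i$, $i\ge 4$, in blue) whose sole neighbour is already used — a contradiction; the leftover $n=3$ blue instance I would dispatch directly, as the residual blue graph on $\{x_1,x_2,y_2,y_3\}$ has no $4$-cycle.

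The different-colour case, with $C_R$ red and $C_B$ blue, is where the real work lies, and I would organise it by whether each cycle is genuine. If neither is genuine they span at most $4<2n$ vertices; if exactly one is genuine, the other spans $2n-1$ (odd, impossible) or is an edge spanning $2n-2$. In the edge case with $C_R$ genuine, fact (i) forces $x_1,y_1\in C_B$, so $C_B=x_1y_1$ and $C_R$ must be a spanning red cycle of $\{x_2,\dots,x_n,y_2,\dots,y_n\}$ — but there $x_2$ has red-degree $1$ (only $y_3$, since $y_1$ is gone), so no such cycle exists; in the edge case with $C_B$ genuine, the $n$ blue-degree-$1$ vertices $y_2,y_3,x_3,\dots,x_n$ would all have to fit in the $2$-vertex red edge, impossible for $n\ge 3$. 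Finally, if both are genuine, fact (i) puts $x_1,y_1\in C_B$ and $y_2,y_3,x_3,\dots,x_n\in C_R$, so $C_B$ has only $x_1,x_2$ available as its $x$-vertices; being a genuine balanced bipartite cycle it is then a blue $4$-cycle through $x_1,x_2$ and two $y$'s, one of which must be $y_1$, whose two cyclic neighbours would be $x_1,x_2$ — but $x_2y_1$ is red, a contradiction.

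The main obstacle I anticipate is precisely this genuine/genuine subcase together with the ``edge complement'' subcase: after the forced placements dictated by the degree-$1$ vertices, one must pin down exactly which vertices remain available to the surviving genuine cycle and show the induced monochromatic graph lacks the required spanning cycle — this hinges on the single well-chosen non-edge ($x_2y_1$ red) or the single low-degree residual vertex ($x_2$ of red-degree $1$). The routine but essential bookkeeping is to make sure the degenerate ``cycles'' (edges and singletons) are always eliminated by the parity/coverage counts in (ii), so that the exhaustive case split genuinely covers every way of choosing the two cycles.
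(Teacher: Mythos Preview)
Your argument is correct (for $n\ge 3$), and your explicit Hamiltonian red path is exactly the one the paper gives, just with the internal zigzag through $S_{\ge 3}$ spelled out. For the impossibility of a two-cycle partition, however, your route differs from the paper's. The paper does not split on the colour profile of the two cycles; it focuses on the single vertex $y_1$ and shows directly that $y_1$ cannot lie on \emph{any} genuine monochromatic cycle: not red (red-degree $1$), and not blue (a second cyclic neighbour of $y_1$ would be some $x_i$ with $i\ge 2$, but $x_2y_1$ is red while every $x_i$ with $i\ge 3$ has blue-degree $1$). Parity rules out singletons, so $y_1$ sits in an edge-cycle; the other cycle then contains $y_2$, hence is red (as $y_2$ has blue-degree $1$), hence excludes $x_1$ (red-degree $1$), so the edge-cycle is $\{x_1,y_1\}$ --- and now $x_2$ has red-degree $1$ in the remainder. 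This is four forced moves rather than a multi-branch case split; your approach, by contrast, is heavier but entirely mechanical once the degree table is written down, and it makes the role of every low-degree vertex transparent.

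One small wrinkle in your write-up: in the ``both red'' branch, the reason $y_1$ cannot lie in the complementary genuine red cycle is simply your fact (i) (red-degree $1$), not that ``its sole neighbour is already used'' --- the sole red neighbour of $y_1$ is $x_2$, which is \emph{not} in the edge $x_1y_2$. The ``already used'' clause is correct in the parallel blue branch (where $y_1$ is consumed by the edge $x_3y_1$), so you should state the two justifications separately.
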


\begin{proof}
Since the graph induced by $\Sgeq{3} = \{x_3, \ldots, x_n\}\cup\{y_3,\ldots,y_n\}$ has all edges coloured red, there is a path $P$ starting in $x_3$ and ending in $y_3$ and passing through all vertices of $\Sgeq{3}$. The red path $x_1y_2x_3Py_3x_2y_1$ is a red Hamiltonian path of $G$.

Now we show that $G$ cannot be partitioned into 2 monochromatic cycles. First, the vertex $y_1$ cannot belong to a cycle with more than two vertices: such a cycle would be blue because only one red edge is incident to $y_1$, and it would then contain one $x_i$ with $i \geq 2$ but such an $x_i$ only has one incident blue edge, towards $y_1$, hence it could not continue further. Note that since the graph is balanced and bipartite, we can assume that no cycles contain a single vertex. Hence $y_1$ does not belong to any cycle.
Let us now assume we have a partition into two monochromatic cycles. One cycle must contain $y_1$ and exactly one of the $x_l$'s. The other cycle must contain $y_2$, therefore it cannot be blue since only one blue edge is incident to that vertex. So it needs to be a red cycle, and then cannot contain $x_1$ (only one incident red edge), hence the first cycle must be $x_1y_1$. Then this second cycle, which is red, must contain both $x_2$ and $y_2$ but $x_2$ has only one incident red edge in $G \setminus \{x_1, y_1\}$ so this is a contradiction.
\end{proof}


Next we present the structure of the proof of \Cref{thm-Knn-4cycles}. In all that follows, we consider a balanced complete bipartite graph $G$ whose edges are coloured red and blue \textbf{and which contains a red Hamiltonian path}. We also consider that this colouring is not split, and we want to show that there exists a partition of $G$ into at most 3 monochromatic cycles.

Our idea is to search for more structure in the above graph $G$. Let us say that each partition class of $G$ has $n$ vertices. We will label its vertices in a particular way. Then, in order to prove \Cref{lmm-Knnwithpath-3cycles}, we proceed by induction on a parameter $k$, $1\le k \le n$, with a well chosen hypothesis. We will try to show the existence of a specific subgraph which we will define later as \emph{a blue even plait}: the first $k/2$ vertices with even indices from both classes (of the bipartition) inducing a complete blue bipartite graph, and the same with the first $k/2$ vertices with odd indices. This exists for $k=1$, and we will show that if $G$ cannot be partitioned into three monochromatic cycles and the induction hypothesis holds for $k$, then it also holds for $k+1$. Iterating this will give us our result, because a blue even plait shall be trivially partitioned into 2 blue cycles. The proof will mostly consists in looking at specific edges and proving that they are blue (though we also need to argue that a few edges are red), with the goal to increase the order of the blue even plait.\\

We split this section into two subsections. Subsection~\ref{subsec:preliminary} contains some preliminary results that we will use many times and help us to organise the other subsection in a concise way. Subsection~\ref{subsec:mainproof} is devoted to the main proof of \Cref{lmm-Knnwithpath-3cycles}, which is longer. The preliminary results include a lemma with a short inductive proof which suffices to deduce a weakening of \Cref{lmm-Knnwithpath-3cycles} with 4 cycles, which in turn (by the above arguments) implies a weakening of \Cref{thm-Knn-4cycles} with 5 cycles.

\subsection{Preliminary structural results}
\label{subsec:preliminary}
Throughout the rest of this paper we deal with balanced complete bipartite graphs whose edges are coloured red and blue and which have a monochromatic Hamiltonian path. Therefore, in order to avoid exhaustive repetitions, it will be convenient to establish the following convention for the order of the vertices in such a path.

\begin{dfn}\label{dfn-zigzaggraph}
A balanced complete bipartite graph with $2n$ vertices whose edges are coloured red and blue is a \textbf{red zigzag graph} when its partition classes are $X=\lbrace x_1,\ldots,x_n \rbrace$ and $Y=\lbrace y_1,\ldots,y_n \rbrace$ and the Hamiltonian path \[
  P=(x_1,y_2,x_3,y_1,\ldots,x_4,y_3,x_2,y_1)
\] is red (see Figure~\ref{fig-zigzaggraph}).

For every $k\leq n$, we also define the sets:
\begin{align*}
 \Sleq{k} &= \{x_i : i\leq k\}\cup\{y_i : i\leq k\}, \\
 \Slt{k} &= \{x_i : i< k\}\cup\{y_i : i < k\}, \\
 \Sgeq{k} &= \{x_i : i\geq k\}\cup\{y_i : i\geq k\}, \\
 \Sgt{k} &= \{x_i : i> k\}\cup\{y_i : i> k\}.
\end{align*}
We denote by $x_iPy_j$ the red Hamiltonian subpath of $P$ starting from vertex $x_i$ and ending at $y_j$, both end-vertices being included.
\end{dfn}

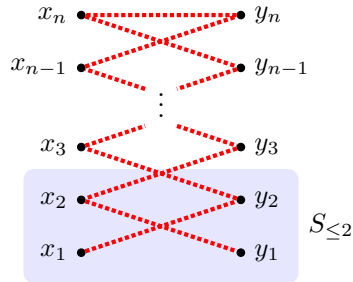
\begin{figure}[!ht]
\centering
\begin{tikzpicture}[scale=0.7]
\node [vertex, label={[label distance=0cm]180:$x_1$}] (x1) at (0,0) {};
\node [vertex, label={[label distance=0cm]180:$x_2$}] (x2) at (0,1) {};
\node [vertex, label={[label distance=0cm]180:$x_3$}] (x3) at (0,2) {};
\node [vertex, label={[label distance=0cm]180:$x_{n-1}$}] (xn-1) at (0,3.5) {};
\node [vertex, label={[label distance=0cm]180:$x_n$}] (xn) at (0,4.5) {};

\node (dots) at (1.5,2.8) {$\myvdots$};

\node [vertex, label={[label distance=0cm]0:$y_1$}] (y1) at (3,0) {};
\node [vertex, label={[label distance=0cm]0:$y_2$}] (y2) at (3,1) {};
\node [vertex, label={[label distance=0cm]0:$y_3$}] (y3) at (3,2) {};
\node [vertex, label={[label distance=0cm]0:$y_{n-1}$}] (yn-1) at (3,3.5) {};
\node [vertex, label={[label distance=0cm]0:$y_n$}] (yn) at (3,4.5) {};

\draw [rededge] (x1)--(y2)--(x3)--(1.2,2.4) (1.8,2.4)--(y3)--(x2)--(y1);
\draw [rededge] (1.2,3.1)--(xn-1)--(yn)--(xn)--(yn-1)--(1.8,3.1);

\begin{scope}[on background layer]
\node[rectangle, rounded corners, fill=blue!10, fit=(x1)(y2), inner xsep=20pt, inner ysep=10pt, label={right:$\Sleq{2}$}]{};
\end{scope}
\end{tikzpicture}
\caption{Labelling of the red Hamiltonian path $P=(x_1,y_2,x_3,y_4,\ldots,x_4,y_3,x_2,y_1)$ of a red zigzag graph with $2n$ vertices. And the set $\Sleq{2}$.}
\label{fig-zigzaggraph}
\end{figure}

In addition to these definitions and to other ones that will appear later, we shall also establish some simple facts that will help us in the proofs of the main results of this section. Such facts will be just stated as remarks, since they are straightforward or can be easily checked.

\begin{rmk}\label{rmk-xiyixiyi+2}
Let $G$ be a red zigzag graph with $2n$ vertices. Then, for $i\leq n$:
\begin{enumerate}[(a)]
\item if the edge $x_iy_i$ is red, then the subgraph $G[\Sgeq{i}]$ has a red Hamiltonian cycle, namely $(x_iPy_i)$ (see Figure~\ref{fig-xiyixiyi+2-a}).
      
\item if the edge $x_iy_{i+2}$ (resp. $x_{i+2}y_i$) is red, then the subgraph $G[\Sgeq{i}]$ can be vertex-partitioned into two red cycles: the red cycle $(x_i,y_{i+2}Px_{i+2},y_{i+1})$ (resp. $(x_{i+1},y_{i+2}Px_{i+2},y_i)$) and the edge-cycle $(x_{i+1},y_i)$ (resp. $(x_i,y_{i+1})$) (see Figure~\ref{fig-xiyixiyi+2-b}).
\end{enumerate}
\end{rmk}

\begin{figure}[!ht]
\centering
\begin{subfigure}[!ht]{0.5\textwidth}
\centering
\begin{tikzpicture}[scale=0.8]
\node [vertex, label={[label distance=0cm]180:$x_i$}] (x1) at (0,0) {};
\node [vertex, label={[label distance=0cm]180:$x_{i+1}$}] (x2) at (0,1) {};
\node [vertex, label={[label distance=0cm]180:$x_{i+2}$}] (x3) at (0,2) {};
\node [vertex, label={[label distance=0cm]180:$x_{n-1}$}] (xn-1) at (0,3.5) {};
\node [vertex, label={[label distance=0cm]180:$x_n$}] (xn) at (0,4.5) {};

\node (dots) at (1.5,2.8) {$\myvdots$};

\node [vertex, label={[label distance=0cm]0:$y_i$}] (y1) at (3,0) {};
\node [vertex, label={[label distance=0cm]0:$y_{i+1}$}] (y2) at (3,1) {};
\node [vertex, label={[label distance=0cm]0:$y_{i+2}$}] (y3) at (3,2) {};
\node [vertex, label={[label distance=0cm]0:$y_{n-1}$}] (yn-1) at (3,3.5) {};
\node [vertex, label={[label distance=0cm]0:$y_n$}] (yn) at (3,4.5) {};

\draw [rededge] (1.8,2.4)--(y3)--(x2)--(y1)--(x1)--(y2)--(x3)--(1.2,2.4);
\draw [rededge] (1.2,3.1)--(xn-1)--(yn)--(xn)--(yn-1)--(1.8,3.1);

\end{tikzpicture}
\caption{}
\label{fig-xiyixiyi+2-a}
\end{subfigure}%
\begin{subfigure}[!ht]{0.5\textwidth}
\centering
\begin{tikzpicture}[scale=0.8]

\node [vertex, label={[label distance=0cm]180:$x_i$}] (x1) at (0,0) {};
\node [vertex, label={[label distance=0cm]180:$x_{i+1}$}] (x2) at (0,1) {};
\node [vertex, label={[label distance=0cm]180:$x_{i+2}$}] (x3) at (0,2) {};
\node [vertex, label={[label distance=0cm]180:$x_{n-1}$}] (xn-1) at (0,3.5) {};
\node [vertex, label={[label distance=0cm]180:$x_n$}] (xn) at (0,4.5) {};

\node (dots) at (1.5,2.8) {$\myvdots$};

\node [vertex, label={[label distance=0cm]0:$y_i$}] (y1) at (3,0) {};
\node [vertex, label={[label distance=0cm]0:$y_{i+1}$}] (y2) at (3,1) {};
\node [vertex, label={[label distance=0cm]0:$y_{i+2}$}] (y3) at (3,2) {};
\node [vertex, label={[label distance=0cm]0:$y_{n-1}$}] (yn-1) at (3,3.5) {};
\node [vertex, label={[label distance=0cm]0:$y_n$}] (yn) at (3,4.5) {};

\draw [rededge] (1.8,2.4)--(y3)--(x1)--(y2)--(x3)--(1.2,2.4);
\draw [rededge] (1.2,3.1)--(xn-1)--(yn)--(xn)--(yn-1)--(1.8,3.1);
\draw [rededge] (x2)--(y1);

\end{tikzpicture}
\caption{}
\label{fig-xiyixiyi+2-b}
\end{subfigure}

\caption{The subgraph $G[\Sgeq{i}]$ according to cases (a) and (b) of Remark \ref{rmk-xiyixiyi+2}.}
\label{fig-xiyixiyi+2}
\end{figure}
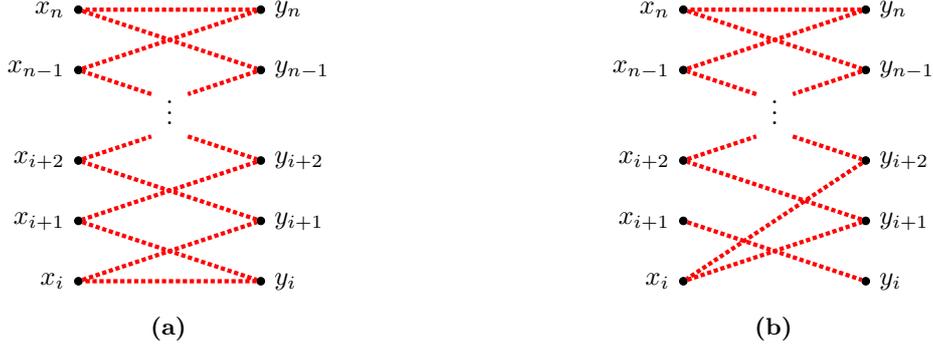

In the above language, proving Lemma \ref{lmm-Knnwithpath-3cycles} is the same as proving that \emph{every red zigzag graph can be vertex-partitioned into at most $3$ monochromatic cycles}. Therefore, the lemma below is a weaker version of Lemma \ref{lmm-Knnwithpath-3cycles} which allows one extra monochromatic cycle to be used. As compensation, this version gives us some additional information about the monochromatic cycles of the partition we obtained.\\

\begin{lmm}\label{lmm-Knn-withpath-4cycles}
\vspace*{-0.2cm}
If $G$ is a red zigzag graph with $2n$ vertices, then $G$ can be vertex-partitioned into at most $t$ monochromatic cycles satisfying one of the following conditions.
\begin{enumerate}[(i)]
\item $t=2$.
\item $t=3$ and the edge $x_1y_1$ is used in some blue cycle of the partition.
\item $t=4$ and the edges $x_1y_1$ and $x_2y_2$ are used in different blue cycles of the partition.
\end{enumerate}
\end{lmm}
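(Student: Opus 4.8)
The plan is to prove \Cref{lmm-Knn-withpath-4cycles} by induction on $n$, peeling off the two ``outermost'' vertices $x_1$ and $y_1$ at each step. The crucial structural observation is that the subpath of $P$ joining $x_2$ and $y_2$ is exactly $P$ with its two end-vertices deleted, and hence covers precisely $\Sgeq{2}$; so after reindexing $x_{i+1}\mapsto x_i$ and $y_{i+1}\mapsto y_i$ the induced graph $G':=G[\Sgeq{2}]$ is again a red zigzag graph, on $2(n-1)$ vertices, in which $x_2y_2$ and $x_3y_3$ play the roles of $x_1y_1$ and $x_2y_2$. One checks (from the explicit form of $P$, handling the parity of the ``turn'' automatically) that this reindexing does send $P$ to the zigzag path on $2(n-1)$ vertices. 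The base case $n=1$ is a single red edge, giving case~(i). For the inductive step I would first dispose of the easy colour configuration via \Cref{rmk-xiyixiyi+2}(a): if $x_1y_1$ is red, then $(x_1Py_1)$ is a red Hamiltonian cycle and we are in case~(i) with one cycle.

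So assume $x_1y_1$ is blue and apply the induction hypothesis to $G'$. If $G'$ falls under case~(i) (at most two cycles) or case~(ii) (at most three cycles, with $x_2y_2$ in a blue cycle $C$), I simply adjoin the blue edge-cycle $(x_1,y_1)$: this raises the count by one and puts $x_1y_1$ into a blue cycle, so $G$ lands in case~(ii) or case~(iii) respectively (in the latter, $x_1y_1$ and $x_2y_2$ occupy the two distinct blue cycles $(x_1,y_1)$ and $C$). These subcases are routine.

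The delicate point, which I expect to be the main obstacle, is when $G'$ itself requires case~(iii): we already have four cycles covering $\Sgeq{2}$, with $x_2y_2\in C_1$ and $x_3y_3\in C_2$ lying in two distinct blue cycles, and we can no longer afford a fifth cycle for $x_1$ and $y_1$. Here I would invoke \Cref{rmk-xiyixiyi+2}(b) with $i=1$: if either $x_1y_3$ or $x_3y_1$ is red, then $G$ partitions into two red cycles and we are in case~(i). Otherwise $x_1y_1$, $x_1y_3$ and $x_3y_1$ are all blue, so $x_3\,y_1\,x_1\,y_3$ is a blue path; deleting the edge $x_3y_3$ from the blue cycle $C_2$ and re-closing it through this path absorbs $x_1$ and $y_1$ into $C_2$ without creating any new cycle. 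The resulting partition still has at most four cycles, with $x_1y_1$ now inside the enlarged blue cycle and $x_2y_2$ untouched in $C_1$, so $G$ again satisfies case~(iii).

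What makes the induction close is precisely the bookkeeping carried by conditions~(ii) and~(iii): knowing \emph{which} edges lie in blue cycles is exactly what lets us either extend a blue cycle through $\{x_1,y_1\}$ (the hard subcase) or promote the partition to the next case while keeping the cycle count bounded by $4$. I would also verify the small cases ($n\le 2$) directly and note that whenever $G'$ invokes case~(iii) it automatically contains $x_3$ and $y_3$, so the final argument is well defined; both checks are immediate from the ranges of the indices.
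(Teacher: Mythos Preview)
Your proof is correct and follows essentially the same induction as the paper: peel off $x_1,y_1$, apply the hypothesis to $G[\Sgeq{2}]$, adjoin the blue edge-cycle $(x_1,y_1)$ in subcases (i)/(ii), and in subcase (iii) use the blue edges $x_1y_1,x_1y_3,x_3y_1$ to splice $x_1,y_1$ into the blue cycle through $x_3y_3$. The only cosmetic difference is that the paper assumes case~(i) fails at the outset (forcing all three of $x_1y_1,x_1y_3,x_3y_1$ blue via both parts of \Cref{rmk-xiyixiyi+2}) before applying induction, whereas you defer the check on $x_1y_3$ and $x_3y_1$ to the subcase where it is actually needed; the logic is the same.
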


\begin{proof}
We prove this lemma by induction on $n$. For $n\leq 2$, the graph $G$ satisfies condition (i) trivially. For $n\geq 3$, we may assume that $G$ does not satisfy condition (i), since otherwise we would be done. Hence, by Remark \ref{rmk-xiyixiyi+2}, we know that the edges $x_1y_1$, $x_1y_3$ and $x_3y_1$ are blue. Now, we apply the inductive hypothesis to the subgraph $G[\Sgt{1}]$. If $G[\Sgt{1}]$ satisfies conditions (i) or (ii), then we may simply take the edge $x_1y_1$ as a blue cycle and so we see that $G$ satisfies conditions (ii) or (iii), respectively. On the other hand, if $G[\Sgt{1}]$ satisfies condition (iii), then we may use the edges $x_1y_1$, $x_1y_3$ and $x_3y_1$ to extend the blue cycle of the partition of $G[\Sgt{1}]$ that uses the edge $x_3y_3$ and so we see that $G$ satisfies condition (iii) as well. In any case, the result follows.
\end{proof}

In view of the proof of Theorem \ref{thm-Knn-4cycles}, observe that the lemma above has a certain relevance by itself, since it implies that $5$ monochromatic cycles are sufficient to vertex-partition any balanced complete bipartite graph whose edges are coloured red and blue, which already improves results from~\cite{schaudt2019partitioning} (where it is proved that large balanced bipartite graphs can be partitioned into 12 cycles), with a significantly shorter proof. 

 However, Lemma \ref{lmm-Knn-withpath-4cycles} is just the first step towards Theorem~\ref{thm-Knn-4cycles}. The next step is to introduce some tools to prove \Cref{lmm-Knnwithpath-3cycles}. We will also prove a weaker version of \Cref{lmm-Knnwithpath-3cycles}, which will make the proof of \Cref{lmm-Knnwithpath-3cycles} clearer.

\begin{dfn}\label{dfn-specialset}
Let $G$ be a red zigzag graph with $2n$ vertices. For $2\le k\leq n$, the set $\Sleq{k}$ is a \textbf{blue special set} when the following three conditions hold:
\begin{enumerate}[(i)]
\item the subgraph $G[\Sleq{k-2}]$ is either empty or has a blue Hamiltonian cycle,
\item the subgraph $G[\Sleq{k-1}]$ has a blue Hamiltonian cycle, and
\item the subgraph $G[\Sleq{k}]$ has a blue Hamiltonian cycle that uses the edges $x_{k-1}y_{k-1}$ and $x_ky_k$.
\end{enumerate}
\end{dfn}

Furthermore, we make the following remark to shorten the proofs to come.

\begin{rmk}\label{rmk-cycle+cycle}
Let $G$ be a complete bipartite graph and $C_1=(u_1,\ldots,u_s)$ and $C_2=(v_1,\ldots,v_t)$ be two disjoint blue cycles in $G$, where $u_1,v_1\in X$. Then $u_1v_t$ and $v_1u_s$ are edges of $G$. If those edges are both blue, then there is a blue cycle in $G$ that passes through all edges in $C_1$ and $C_2$ except $u_1u_s$ and $v_1v_t$ and covers all vertices in $C_1$ and $C_2$, namely $(u_1,\ldots,u_s,v_1,\ldots,v_t)$ (see Figure~\ref{fig-cycle+cycle}).
\end{rmk}

\begin{figure}[!ht]
\centering
\begin{tikzpicture}[scale=0.8]
        \node (dotsu) at (1.5,4.2) {$\cdots$};

        \node [vertex, label={[label distance=0cm]180:$u_{s-1}$}] at (0.4,4) {};
        \node [vertex, label={[label distance=0cm]0:$u_2$}] at (2.6,4) {};

        \node [vertex, label={[label distance=0cm]180:$u_1$}] (u1) at (0,3) {};
        \node [vertex, label={[label distance=0cm]0:$u_s$}] (us) at (3,3) {};

        \node [vertex, label={[label distance=0cm]180:$v_1$}] (v1) at (0,2) {};
        \node [vertex, label={[label distance=0cm]0:$v_t$}] (vt) at (3,2) {};

        \node [vertex, label={[label distance=0cm]0:$v_2$}] at (2.6,1) {};
        \node [vertex, label={[label distance=0cm]180:$v_{s-1}$}] at (0.4,1) {};

        \node (dotsv) at (1.5,0.8) {$\cdots$};

        \begin{scope}[on background layer]
        \draw [blueedge] (v1)--(us) (u1)--(vt);
        \draw [blueedge] (u1) .. controls (3,4) .. (dotsu);
        \draw [blueedge] (us) .. controls (0,4) .. (dotsu);
        \draw [blueedge] (v1) .. controls (3,1) .. (dotsv);
        \draw [blueedge] (vt) .. controls (0,1) .. (dotsv);
        \end{scope}

        \node at (5, 2.5) {$=$};
        \pgfmathsetmacro{\raio}{1.4}
        \begin{scope}[xshift=7.5cm]
            \begin{scope}[yshift=2.5cm]
                    \draw[blueedge] (20:\raio) arc (20:160:\raio);
                    \draw[blueedge] (200:\raio) arc (200:340:\raio);
                    \node[anchor=center] at (175:\raio) {$\myvdots$};
                    \node[vertex, label={left:$v_1$}] (v1new) at (20:\raio){};
                    \node[vertex, label={below:$v_2$}] at (84:\raio){};
                    \node[vertex, label={left:$v_t$}] (vtnew) at (340:\raio){};
                    \foreach \i in {1,...,5}{
                        \node[vertex] at ({20+320*\i/5}:\raio){};
                    }
            \end{scope}
            \begin{scope}[shift={(3.5,2.5)}]
                    \draw[blueedge] (200:\raio) arc (200:340:\raio);
                    \draw[blueedge] (380:\raio) arc (380:520:\raio);
                    \node[anchor=center] at (5:\raio) {$\myvdots$};
                    \node[vertex, label={right:$u_1$}] (u1new) at (200:\raio){};
                    \node[vertex, label={above:$u_2$}]  at (264:\raio){};
                    \node[vertex, label={right:$u_s$}] (usnew) at (520:\raio){};
                    \foreach \i in {1,...,5}{
                        \node[vertex] at ({200+320*\i/5}:\raio){};
                    }
            \end{scope}
            \draw [blueedge] (v1new)--(usnew) (u1new)--(vtnew);
        \end{scope}
\end{tikzpicture}
\caption{The blue cycle $(u_1,\ldots,u_s,v_1,\ldots,v_t)$ represented in two ways. On the left the layout highlights the two classes of the bipartition.}
\label{fig-cycle+cycle}
\end{figure}

The following lemma shows us that the existence of a blue special set in a red zigzag graph is a sufficient extra condition to make Lemma \ref{lmm-Knnwithpath-3cycles} valid.

\begin{lmm}\label{lmm-Knn-withpath-specialset}
Let $G$ be a red zigzag graph with $2n$ vertices. If $G$ has a \textbf{blue special set}, then $G$ can be vertex-partitioned into at most $3$ monochromatic cycles.
\end{lmm}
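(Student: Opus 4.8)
The plan is to work with a blue special set of largest possible index. Among all $k$ with $2\le k\le n$ such that $\Sleq{k}$ is a blue special set, I would pick the largest; call it $k$ (it exists because $G$ has at least one blue special set by hypothesis). If $k=n$, then condition~(iii) of Definition~\ref{dfn-specialset} already provides a blue Hamiltonian cycle of $G[\Sleq{n}]=G$, so a single monochromatic cycle suffices. Hence I may assume $k<n$, which guarantees that the vertices $x_{k+1}$ and $y_{k+1}$ exist. I would then distinguish three cases according to the colours of the edges $x_{k+1}y_{k+1}$, $x_{k-1}y_{k+1}$ and $x_{k+1}y_{k-1}$; these are clearly exhaustive.

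In the first case, suppose $x_{k+1}y_{k+1}$ is red. Then Remark~\ref{rmk-xiyixiyi+2}(a), applied with $i=k+1$, yields a red Hamiltonian cycle of $G[\Sgeq{k+1}]$, and combining it with the blue Hamiltonian cycle of $G[\Sleq{k}]$ given by condition~(iii) partitions $G$ into just two cycles. In the second case, suppose $x_{k-1}y_{k+1}$ or $x_{k+1}y_{k-1}$ is red. Then Remark~\ref{rmk-xiyixiyi+2}(b), applied with $i=k-1$, partitions $G[\Sgeq{k-1}]$ into two red cycles; together with the blue Hamiltonian cycle of $G[\Sleq{k-2}]$ furnished by condition~(i) this gives a partition of $G$ into at most three cycles. (When $k=2$ the set $\Sleq{k-2}$ is empty, and the two red cycles alone already suffice.)

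It remains to treat the third case, in which all three edges $x_{k+1}y_{k+1}$, $x_{k-1}y_{k+1}$, $x_{k+1}y_{k-1}$ are blue, and this is where I would invoke the maximality of $k$. Let $C$ be the blue Hamiltonian cycle of $G[\Sleq{k}]$ provided by condition~(iii), which uses the edges $x_{k-1}y_{k-1}$ and $x_ky_k$. Replacing the edge $x_{k-1}y_{k-1}$ of $C$ by the path $(x_{k-1},y_{k+1},x_{k+1},y_{k-1})$ — which is legitimate precisely because its three edges $x_{k-1}y_{k+1}$, $y_{k+1}x_{k+1}$ and $x_{k+1}y_{k-1}$ are all blue — produces a blue Hamiltonian cycle of $G[\Sleq{k+1}]$ that still uses $x_ky_k$ and now also uses $x_{k+1}y_{k+1}$. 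Verifying Definition~\ref{dfn-specialset} for $\Sleq{k+1}$ is then immediate: its condition~(i) follows from the former condition~(ii), its condition~(ii) from the former condition~(iii), and its condition~(iii) from the cycle just built. Thus $\Sleq{k+1}$ is a blue special set, contradicting the maximality of $k$; so the third case cannot occur.

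The main obstacle, and the only genuinely creative point, is this last extension step. One must recognise that the colours of the three edges $x_{k+1}y_{k+1}$, $x_{k-1}y_{k+1}$, $x_{k+1}y_{k-1}$ give exactly the dichotomy ``either a red cycle can be peeled off immediately (cases one and two) or the blue special set can be pushed one level higher''. The definition of blue special set is evidently engineered for this: condition~(iii) insists on retaining \emph{both} edges $x_{k-1}y_{k-1}$ and $x_ky_k$ so that $x_{k-1}y_{k-1}$ is available to be re-routed through the two new vertices while $x_ky_k$ is preserved for the next round. The care needed is entirely in the bookkeeping of which edges are used, freed, and preserved; no nontrivial computation is required.
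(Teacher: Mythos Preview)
Your argument is correct, and it is genuinely different from the paper's. The paper does \emph{not} take a maximal blue special set; it works with an arbitrary one. After disposing of the cases where one of the four edges $x_{k-1}y_{k+1}$, $x_{k+1}y_{k-1}$, $x_ky_{k+2}$, $x_{k+2}y_k$ is red (via Remark~\ref{rmk-xiyixiyi+2}(b) together with conditions~(i) and~(ii) of the definition), the paper is left with the situation where all four are blue. At that point it invokes Lemma~\ref{lmm-Knn-withpath-4cycles} on the tail $G[\Sgt{k}]$ and then uses Remark~\ref{rmk-cycle+cycle} to splice the resulting blue cycles (one or two of them, containing $x_{k+1}y_{k+1}$ and possibly $x_{k+2}y_{k+2}$) onto the blue Hamiltonian cycle of $G[\Sleq{k}]$ through those four edges.

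Your maximality trick sidesteps all of this: you only need the three edges $x_{k+1}y_{k+1}$, $x_{k-1}y_{k+1}$, $x_{k+1}y_{k-1}$, and in the all-blue case you simply push the special set one step further instead of calling an auxiliary lemma. In particular your proof does not rely on Lemma~\ref{lmm-Knn-withpath-4cycles} or on the cycle-merging Remark~\ref{rmk-cycle+cycle}, so it is strictly more self-contained. The paper's route has the minor advantage of being a direct construction from any given special set (no extremal choice), but your observation that the third case exactly promotes $\Sleq{k}$ to $\Sleq{k+1}$ is the cleaner explanation of why the three-level definition of ``blue special set'' is the right one.
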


\begin{proof}
Let $\Sleq{k}$ be a blue special set in $G$ for some $2\leq k\leq n$. By Definition \ref{dfn-specialset}~(iii), $G[\Sleq{k}]$ has a blue Hamiltonian cycle.
Hence, we may assume that $n > k+2$, since otherwise we would be done.
By Remark \ref{rmk-xiyixiyi+2}(b) and Definition \ref{dfn-specialset}~(iii), we may also assume that the edges $x_{k-1}y_{k+1}$, $x_{k+1}y_{k-1}$, $x_ky_{k+2}$ and $x_{k+2}y_k$ are blue, since otherwise we would be done.

Now, we apply \Cref{lmm-Knn-withpath-4cycles} to the subgraph $G[\Sgt{k}]$. If $G[\Sgt{k}]$ satisfies condition (i), then we may simply take a blue Hamiltonian cycle in $G[\Sleq{k}]$ to obtain a good partition of $G$. 
By Remark~\ref{rmk-cycle+cycle}, if $G[\Sgt{k}]$ satisfies condition (ii), then we  use the edges $x_{k-1}y_{k+1}$ and $x_{k+1}y_{k-1}$ to combine the blue cycle that uses the edge $x_{k+1}y_{k+1}$ from the $3$-cycle partition of $G[\Sgt{k}]$ with the blue Hamiltonian cycle of $G[\Sleq{k}]$ (since it uses the edge $x_{k-1}y_{k-1}$), thereby obtaining a good partition of $G$. Observe that the blue cycle we have just built also passes through the edge $x_ky_k$. Hence, if $G[\Sgt{k}]$ satisfies condition (iii), then first we do the same operation as in the previous case (using edges $x_{k-1}y_{k+1}$ and $x_{k+1}y_{k-1}$) and after that we use the edges $x_ky_{k+2}$ and $x_{k+2}y_k$ to build a blue cycle in $G$ that covers all vertices in $G[\Sleq{k}]$ and all vertices of two different blue cycles of the partition of $G[\Sgt{k}]$ (remember that the edges $x_{k+1}y_{k+1}$ and $x_{k+2}y_{k+2}$ were in different cycles of the partition). Thereby obtaining a desired partition of $G$. In every case, the result follows.
\end{proof}

To prove the next lemma, it will be necessary to make a more involved analysis of the edges compared to what we did in the proofs of Lemmas \ref{lmm-Knn-withpath-4cycles} and \ref{lmm-Knn-withpath-specialset}. Therefore, as a way to facilitate our work and the reader's understanding, we shall make some considerations first.

\begin{dfn}
    An edge $x_iy_j$ in a red zigzag graph is \textbf{even} when $i$ and $j$ have the same parity, i.e., when $i+j$ is even.
\end{dfn}

 Usually, if an even edge with ``low'' values for its indices is red, we can then partition the edges into 3 monochromatic cycles. Otherwise if that edge is blue we can look at other edges. The fact that such an edge is blue helps us little by little to establish a nice blue structure.

\begin{dfn}\label{dfn-evenplait}
Let $G$ be a red zigzag graph with $2n$ vertices. For $1\leq k\leq n$, the subgraph $G[\Sleq{k}]$ is a \textbf{blue even plait} when all its even edges are blue. We denote by $G_k^O$ the subgraph of $G$ formed by vertices of odd indices in $\Sleq{k}$, and $G_k^E$ the one formed by vertices of even indices.
\end{dfn}

\begin{rmk}\label{rmk-evenplait-2cycles}
Let $G$ be a red zigzag graph with $2n$ vertices. If $G[\Sleq{k}]$ is a blue even plait, for $1 \le k\le n$, then $G_k^O$ and $G_k^E$ are complete balanced bipartite graphs whose edges are all blue. In particular, each of them is Hamiltonian, so $G[\Sleq{k}]$ can be partitioned into at most two monochromatic blue cycles.
\end{rmk}

\begin{notation}\label{notation-cycles}
A Hamiltonian path in $G_k^E$ starting in $x_i$ and ending in $y_j$ is denoted $x_i\To[E_k]y_j$ (note that such a path needs to start and finish on different classes of the bipartition).

A path in $G_k^E$ starting in $x_i$ and ending in $x_j$ that passes through all vertices of $G_k^E$ except a given $y_\ell$ is denoted $x_i\To[E_k\setminus y_\ell]x_j$ (note that such a path needs to start and finish on the same class of the bipartition).

We use the same notations ($x_i\To[O_k]y_j$ and $x_i\To[O_k\setminus y_\ell]x_j$) for $G_k^O$.
\end{notation}

Such Hamiltonian or quasi-Hamiltonian paths with specific extremities will be extensively used in the proof of \Cref{lmm-Kn-nsplit-3cycles}. Indeed, we will regularly manage to show that a few edges between $G_k^O$ and $G_k^E$ are blue too. Then by choosing carefully the end-vertices of two $\text{(quasi-)Hamiltonian}$ paths we will be able to combine the two paths into a cycle passing through all the vertices in $G_k^O \cup G_k^E$ except at most two of them, helping us to get a decomposition of the red zigzag graph into at most 3 monochromatic cycles.

\subsection{Main proof}
\label{subsec:mainproof}

After these considerations, we may advance to our next result. As we shall see hereafter, Lemma \ref{lmm-Knnwithpath-3cycles} follows almost directly from the following lemma.

\begin{lmm}\label{lmm-Knn-withpath-evenplait}
Let $G$ be a red zigzag graph with $2n$ vertices which cannot be vertex-partitioned into at most $3$ monochromatic cycles. If the subgraph $G[\Sleq{k}]$ is a blue even plait for some $1 \leq k \leq n-2$, then $G[\Sleq{k+1}]$ is a blue even plait as well (see Definition~\ref{dfn-evenplait}).
\end{lmm}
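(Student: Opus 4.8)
The plan is to prove the contrapositive: assuming $\Sleq{k}$ is a blue even plait while $\Sleq{k+1}$ is \emph{not}, I will exhibit a partition of $G$ into at most three monochromatic cycles, contradicting the hypothesis. Extending the plait from $k$ to $k+1$ amounts to adjoining the two vertices $x_{k+1},y_{k+1}$ to whichever of $G_k^O,G_k^E$ matches the parity of $k+1$, and the even edges of $\Sleq{k+1}$ not already known to be blue are exactly those joining index $k+1$ to an index of the same parity, namely $x_{k+1}y_j$ and $x_jy_{k+1}$ with $j\equiv k+1\pmod 2$ and $j\le k+1$. So it suffices to show all of these are blue, and I will argue that if any one is red then $G$ can be partitioned into at most three monochromatic cycles. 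Note that the constraint $k\le n-2$ guarantees that the upper part contains at least the two indices $k+1,k+2$, which is what the tools below require.

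First I would dispose of the diagonal edge $x_{k+1}y_{k+1}$. If it were red, then by \Cref{rmk-xiyixiyi+2}(a) the subgraph $G[\Sgeq{k+1}]$ would have a red Hamiltonian cycle, while \Cref{rmk-evenplait-2cycles} partitions the plait $G[\Sleq{k}]$ into at most two blue cycles; since $\Sleq{k}$ and $\Sgeq{k+1}$ partition $V(G)$, this would be a partition of $G$ into at most three monochromatic cycles, a contradiction. Hence $x_{k+1}y_{k+1}$ is blue, so $x_{k+1}$ and $y_{k+1}$ are blue-adjacent and can be attached to the enlarged parity class.

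For the remaining new even edges I would work from the diagonal outwards, mirroring the proof of \Cref{lmm-Knn-withpath-specialset} but with the plait playing the role of the special set. The idea is to apply \Cref{lmm-Knn-withpath-4cycles} to the upper part $G[\Sgt{k}]$ (a red zigzag graph under the induced labelling), obtaining at most four monochromatic cycles together with the information of which blue cycles contain the edges $x_{k+1}y_{k+1}$ and $x_{k+2}y_{k+2}$, and simultaneously to cover $\Sleq{k}$ by blue Hamiltonian and quasi-Hamiltonian paths of $G_k^O$ and $G_k^E$ (\Cref{notation-cycles}). The edges that would splice an upper blue cycle onto this cover via \Cref{rmk-cycle+cycle} are precisely new even edges such as $x_{k-1}y_{k+1}$, $x_{k+1}y_{k-1}$, $x_ky_{k+2}$ and $x_{k+2}y_k$. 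This sets up a dichotomy for each such edge: either it is blue, in which case \Cref{rmk-cycle+cycle} lets me absorb an upper blue cycle into a plait class, or it is red, in which case \Cref{rmk-xiyixiyi+2}(b) replaces the relevant red Hamiltonian subpath by two red cycles. In either branch I aim to reach at most three monochromatic cycles, and verifying that one always can is what forces every new even edge to be blue.

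The main obstacle is that the plait is naturally covered by \emph{two} blue cycles, one per parity class of $\Sleq{k}$, so a naive construction produces four cycles rather than three. The heart of the argument is therefore to merge the two parity classes, or to absorb the few leftover low-index vertices into the red cycles, without overshooting; and since every edge joining $G_k^O$ to $G_k^E$ is odd, such a merge needs blue \emph{odd} edges, which the plait does not supply. These must instead be extracted from the assumption that $G$ has no three-cycle partition, or circumvented by choosing the quasi-Hamiltonian paths so that the two leftover vertices fall inside a red cycle. I expect the bulk of the length to be a case analysis split according to the parity of $k+1$, according to which new even edge is assumed red, and according to whether \Cref{lmm-Knn-withpath-4cycles} returns outcome (i), (ii) or (iii) on $G[\Sgt{k}]$; in each branch the delicate point is to match the end-vertices of the $G_k^O$ and $G_k^E$ (quasi-)Hamiltonian paths to the available blue edges so that \Cref{rmk-cycle+cycle} applies and exactly three monochromatic cycles remain.
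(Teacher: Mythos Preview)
Your opening moves are correct: the contrapositive is the right framing, the argument that $x_{k+1}y_{k+1}$ is blue via \Cref{rmk-xiyixiyi+2}(a) and \Cref{rmk-evenplait-2cycles} is exactly how the paper begins, and you have correctly identified the central obstacle, namely that the plait gives \emph{two} blue cycles rather than one.

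Where your plan breaks down is the proposed mechanism for the remaining even edges. You intend to apply \Cref{lmm-Knn-withpath-4cycles} to $G[\Sgt{k}]$ and then run a dichotomy on each boundary edge: if blue, splice via \Cref{rmk-cycle+cycle}; if red, split the top via \Cref{rmk-xiyixiyi+2}(b). But count cycles in each branch. In outcome (i) of \Cref{lmm-Knn-withpath-4cycles} the top already costs two cycles and the plait costs two more, with no handle on which edges the top cycles use, so no splice is available. In outcome (ii) you have three top cycles and two plait cycles; a single splice through $x_{k\pm 1}y_{k+1}$ merges one top blue cycle with one parity class, leaving four. In the red branch of your dichotomy, \Cref{rmk-xiyixiyi+2}(b) on, say, $x_{k+1}y_{k-1}$ gives two red cycles on $\Sgeq{k-1}$, and $\Sleq{k-2}$ is again a plait costing two blue cycles: four again. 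Every branch of the scheme you describe lands on four, not three, and you have not supplied the extra merge. You acknowledge that this merge needs blue \emph{odd} edges and defer their existence to ``the assumption that $G$ has no three-cycle partition'' and to ``a case analysis'', but this is exactly the content of the lemma, not a step one can wave past.

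The paper's route is genuinely different from yours and avoids applying \Cref{lmm-Knn-withpath-4cycles} to $G[\Sgt{k}]$ altogether. It fixes a specific offending red edge $x_{k+1}y_j$ and exploits it repeatedly: combined with segments of the red Hamiltonian path $P$, this edge manufactures bespoke red cycles whose leftover sets are small and explicit (for instance $(x_{k+1}Py_1)$, or $(x_{k+1}Px_{j+1},y_1Py_j)$, or $(x_jPx_{k+1},y_jPy_{k+2})$). Analysing each such leftover forces, one by one, a sequence of further edges to be blue, crucially including \emph{odd} edges such as $x_{j+1}y_1$, $x_1y_k$, $x_{k-1}y_2$, $x_2y_{k+2}$ that bridge the two parity classes. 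These bridges are what let the paper assemble single blue Hamiltonian cycles on $\Sleq{k}$, $\Sleq{k+1}$, $\Sleq{k+2}$ (and eventually $\Sleq{k+3}$, $\Sleq{k+4}$), thereby producing a blue \emph{special set} in the sense of \Cref{dfn-specialset}. The contradiction then comes from \Cref{lmm-Knn-withpath-specialset}, not from a direct three-cycle construction. The long chain of claims (about fourteen in each parity case) is not optional bookkeeping; it is precisely the mechanism that extracts the odd blue edges your plan needs but does not obtain.
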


\begin{proof}
First since $G$ cannot be vertex-partitioned into 3 monochromatic cycles, we can easily see that $n \geq 5$. It is also easy to check that $G[\Sleq{4}]$ is a blue even plait: if the edge $x_1y_1$ (resp. $x_2y_2$) were red, then $G$ (resp. $G\setminus \{x_1y_1\}$) would be Hamiltonian; if any of the edges $x_2y_2$, $x_1y_3$ or $x_3y_1$ were red we could decompose $G$ into a red cycle and one edge; $x_3y_3$ being red would imply a decomposition into a cycle and two paths.\\
Now if $x_2y_4$ or $x_4y_2$ were red we  could decompose $G$ into a red cycle and two edges; if $x_4y_4$ were red then we would decompose into the big red cycle using $x_4y_4$, the blue cycle we just showed the existence ($x_1y_1x_3y_3$) and the edge $x_2y_2$. We have just proved that the  even edges within $G[\Sleq{4}]$ are blue, hence  $G[\Sleq{4}]$ is a blue even plait as we claimed. Therefore we can assume that $k\ge 4$, so that each of $G_k^O$ and $G_k^E$ have at least 4 vertices. Also, Remarks \ref{rmk-xiyixiyi+2}(a) and \ref{rmk-evenplait-2cycles} imply that \textbf{the edge $x_{k+1}y_{k+1}$ is blue}. 

Now, suppose towards a contradiction that $G[\Sleq{k+1}]$ is not a blue even plait. Hence, there is some natural number $j$ less than $k+1$ such that $j$ and $k+1$ have the same parity and at least one of the edges $x_{k+1}y_j$ or $x_jy_{k+1}$ is red. We shall show that this implies that $G$ has a blue special set, which is a contradiction by Lemma~\ref{lmm-Knn-withpath-specialset}.\\

We divide the rest of the proof into two cases depending on the parity of $k$. These cases are treated quite similarly, but at some points they are not absolutely analogous. Therefore, we will guide the reader to understand the analogy whenever possible, and give the full details for both cases when the new one needs bigger adaptations.

\begin{center}
\textbf{Case A:} $k$ is even and $k\ge 4$, therefore $j$ is odd with $1 \leq j < k \leq n-2$.
\end{center}
Note that some claims below will require $k < n-4$, but we will see that these claims are only needed when $k < n-4$: otherwise the proof would end before.
\stepcounter{case}

\begin{clm}\label{clm-x1ykxk-1y2}\mbox{}
\begin{enumerate}[(i)] 
         \item \label{clm-x1ykxk-1y2-itemi} If there exists $1 \leq l \leq k$ odd such that $x_{k+1}y_l$ is red, then $x_1y_k$ and $x_{k-1}y_2$ are not both blue.
         \item \label{clm-x1ykxk-1y2-itemii} If there exists $1 \leq l \leq k$ odd such that $x_ly_{k+1}$ is red, then $x_ky_1$ and $x_2y_{k-1}$ are not both blue.
\end{enumerate}
\end{clm}
\begin{proof}
We will prove only (\ref{clm-x1ykxk-1y2-itemi}) since (\ref{clm-x1ykxk-1y2-itemii}) is analogous. So $x_{k+1}y_l$ is red. Suppose for sake of contradiction that the edges $x_1y_k$ and $x_{k-1}y_2$ are blue (see Figure~\ref{fig-x1ykxk-1y2}). In this case, we shall prove that $\Sleq{k+2}$ is a blue special set of $G$. 

\begin{figure}[!ht]
\centering
\begin{tikzpicture}[yscale=0.7, xscale=1.0]
\node at (-1.5,5) {$G_k^O$};
\node at (4.5,5) {$G_k^O$};
\fill [rounded corners, fill=blue!10] (-1,3.5) rectangle (4,6.5);

\node at (-1.5,8.5) {$G_k^E$};
\node at (4.5,8.5) {$G_k^E$};
\fill [rounded corners, fill=blue!10] (-1,7) rectangle (4,10);

\node [vertex, label={[label distance=0cm]180:$x_1$}] (x1) at (0,4) {};
\node at (0,5) {$\myvdots$};
\node [vertex, label={[label distance=0cm]180:$x_{k-1}$}] (xk-1) at (0,6) {};

\node [vertex, label={[label distance=0cm]180:$x_2$}] (x2) at (0,7.5) {};
\node at (0,8) {$\myvdots$};
\node [vertex, label={[label distance=0cm]180:$x_{l+1}$}] (xl+1) at (0,8.5) {};
\node at (0,9) {$\myvdots$};
\node [vertex, label={[label distance=0cm]180:$x_k$}] (xk) at (0,9.5) {};

\node [vertex, label={[label distance=0cm]180:$x_{k+1}$}] (xk+1) at (0,11) {};
\node [vertex, label={[label distance=0cm]180:$x_{k+2}$}] (xk+2) at (0,12) {};

\node (dots) at (1.5,13) {$\cdots$};

\node [vertex, label={[label distance=0cm]0:$y_1$}] (y1) at (3,4) {};
\node at (3,4.5) {$\myvdots$};
\node [vertex, label={[label distance=0cm]0:$y_l$}] (yl) at (3,5) {};
\node at (3,5.5) {$\myvdots$};
\node [vertex, label={[label distance=0cm]0:$y_{k-1}$}] (yk-1) at (3,6) {};

\node [vertex, label={[label distance=0cm]0:$y_2$}] (y2) at (3,7.5) {};
\node at (3,8.5) {$\myvdots$};
\node [vertex, label={[label distance=0cm]0:$y_k$}] (yk) at (3,9.5) {};

\node [vertex, label={[label distance=0cm]0:$y_{k+1}$}] (yk+1) at (3,11) {};
\node [vertex, label={[label distance=0cm]0:$y_{k+2}$}] (yk+2) at (3,12) {};

\draw [rededge] (yl)--(xk+1)--(yk+2);
\draw [rededge] (xk+1)--(yk) (xk)--(yk+1)--(xk+2); 
\draw [rededge] (xk+2) .. controls (3,13) .. (dots);
\draw [rededge] (yk+2) .. controls (0,13) .. (dots);
\draw [blueedge] (x1)--(yk) (xk-1)--(y2) (xk+1)--(yk+1);
\draw[gray, dashed] (xk)--(yl) (xk+1)--(yk-1) (xl+1)--(yk+1);
\end{tikzpicture}
\caption{The case where the edges $x_1y_k$ and $x_{k-1}y_2$ are blue.}
\label{fig-x1ykxk-1y2}
\end{figure}
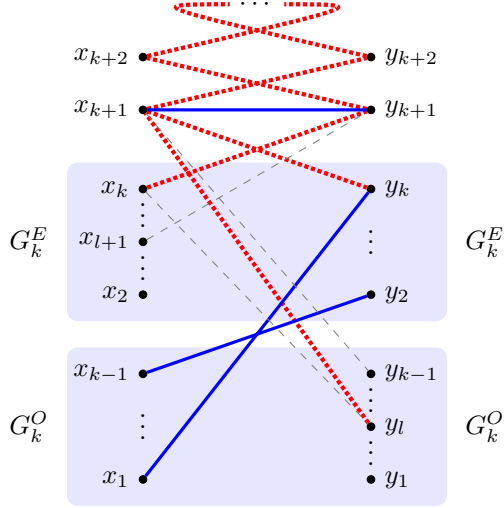

First, let us prove that the edge $x_ky_l$ is blue. Indeed, otherwise we could decompose $G$ into the red cycle $(x_k,y_{k+1}Px_{k+1},y_l)$ that covers $\Sgeq{k+1}\cup \{x_k, y_l\}$ and a blue cycle that covers $\Sleq{k} \setminus \{x_k, y_l\}$. More precisely:
\[
    y_2\To[E_k\setminus x_k] y_k \To x_1 \To[O_k\setminus y_l] x_{k-1} \To y_2.
\]

Now, we can construct a blue Hamiltonian cycle for $G[\Sleq{k}]$: $y_k\To[E_k] x_k \To y_l \To[O_k] x_1 \To y_k$. This cycle was made by taking one cycle in $E_k$, one in $O_k$, removing an edge to each, and linking the two obtained paths with two specific edges. We will use this technique a lot in this proof.
This is our first step towards proving that $\Sleq{k+2}$ is a blue special set of $G$.

The edge $x_{k+1}y_{k-1}$ is blue, otherwise $G$ could be partitioned into the red cycle $(x_k,y_{k+1}Px_{k+1},y_{k-1})$ (recall that $x_ky_{k-1}$ is also an edge of $P$) and (similarly to above) a blue cycle that covers all vertices in $\Sleq{k}$ except $x_k$ and $y_{k-1}$. 
The edge $x_{l+1}y_{k+1}$ is blue too, since otherwise $G$ could similarly be vertex-partitioned into the red cycle $(x_{l+1},y_{k+1}Px_{k+1},y_l)$ and a blue cycle that 
covers all vertices in $\Sleq{k}$ except $x_{l+1}$ and $y_l$. Therefore, we see that the subgraph $G[\Sleq{k+1}]$ has a blue Hamiltonian cycle: \[
        C = (x_{k+1}\To y_{k+1} \To x_{l+1} \To[E_k] y_2 \To x_{k-1} \To[O_k] y_{k-1} \To x_{k+1}).
\]
By Remark \ref{rmk-xiyixiyi+2}(a), it follows that the edge $x_{k+2}y_{k+2}$ is blue.

Next, the edge $x_{k+2}y_k$ is blue, otherwise $G$ could be vertex-partitioned into the edge-cycle $(x_{l+1},y_{k+1})$, the red cycle $(x_{k+1},y_{k+2}Px_{k+2},y_k)$,  and a blue cycle that covers $\Sleq{k}\setminus \{x_{l+1}, y_k\}$. Note that such blue cycle does not use $y_k$ so it does not benefit from the blue edge $x_1y_k$ but rather from $x_ky_l$. Use the edges in $G_k^E$ to go from $y_2$ to $x_{k}$ passing through all its vertices except $x_{l+1}$ and $y_k$, add edge $x_ky_l$ go through all vertices in $G_k^O$ finishing in $x_{k-1}$ and add the edge $x_{k-1}y_2$.
 The edge $x_ky_{k+2}$ is blue too, since otherwise $G$ could be vertex-partitioned into the red cycle $(x_k,y_{k+2}Px_{k+2},y_{k+1})$, the edge-cycle $(x_{k+1},y_{k-1})$ and a blue cycle that covers all vertices of $\Sleq{k}\setminus \{x_k, y_{k-1}\}$ (using the edges $x_1y_k$ and $x_{k-1}y_2$). 

 Hence, we see that the subgraph $G[\Sleq{k+2}]$ has a blue Hamiltonian cycle that uses the edges $x_{k+1}y_{k+1}$ and $x_{k+2}y_{k+2}$, which can be built by the following manner: take the blue Hamiltonian cycle $C$ of $G[\Sleq{k+1}]$ that we have just built. We can require it to contain the edge $x_ky_k$. Then consider the cycle $C' = (C - x_ky_k) + x_ky_{k+2}x_{k+2}y_k$.
Therefore $\Sleq{k+2}$ is a blue special set of $G$, as needed.
\end{proof}

\begin{clm}\label{clm-xk+1y1}
The edges $x_{k+1}y_1$ and $x_1y_{k+1}$ are blue.
\end{clm}
\begin{proof}
Suppose towards a contradiction that the edge $x_{k+1}y_1$ is red (see Figure~\ref{fig-xk+1y1}). In this case, the vertices of $G$ can be partitioned into the red cycle $(x_{k+1}Py_1)$ and the red path $(x_1,y_2Px_{k-1},y_k)$. By Claim \ref{clm-x1ykxk-1y2}(\ref{clm-x1ykxk-1y2-itemi}), with $l=1$, we know that the edges $x_1y_k$ and $x_{k-1}y_2$ are not both blue. Hence, $G$ can be vertex-partitioned into at most $3$ monochromatic cycles, a contradiction. Thus, the edge $x_{k+1}y_1$ is blue. Analogously, the edge $x_1y_{k+1}$ is blue too, by Claim \ref{clm-x1ykxk-1y2}(\ref{clm-x1ykxk-1y2-itemii}). Thus, the result follows.

\begin{figure}[!ht]
\centering
\begin{tikzpicture}[scale=1.0]
\node [vertex, label={[label distance=0cm]270:$x_1$}] (x1) at (0,0) {};
\node [vertex, label={[label distance=0cm]270:$y_2$}] (y2) at (1,0) {};
\node (dotsx) at (2,0) {$\cdots$};
\node [vertex, label={[label distance=0cm]270:$x_{k-1}$}] (xk-1) at (3,0) {};
\node [vertex, label={[label distance=0cm]270:$y_k$}] (yk) at (4,0) {};
\node [vertex, label={[label distance=0cm]270:$x_{k+1}$}] (xk+1) at (5,0) {};
\node (dotsn) at (8,0) {$\cdots$};
\node [vertex, label={[label distance=0cm]270:$y_1$}] (y1) at (11,0) {};

\draw [rededge] (x1)--(y2)--(dotsx)--(xk-1)--(yk)--(xk+1)--(dotsn)--(y1);
\draw [rededge] (xk+1) to [out=30, in=150] (y1);

\end{tikzpicture}
\caption{The case where the edge $x_{k+1}y_1$ is red.}
\label{fig-xk+1y1}
\end{figure}
\end{proof}

\begin{clm}\label{clm-xk+2yk+2-A}
The edge $x_{k+2}y_{k+2}$ is blue.
\end{clm}

\begin{proof}
By \Cref{clm-xk+1y1}, we see that the subgraph $G[\Sleq{k+1}]$ can be vertex-partitioned into a blue Hamiltonian cycle of $G_k^E$ and a blue cycle that covers $x_{k+1}$, $y_{k+1}$ and all vertices in $G_k^O$, which can be built by the following manner: take a blue Hamiltonian cycle of $G_k^O$ that uses the edge $x_1y_1$; and use the path $(x_1,y_{k+1},x_{k+1},y_1)$ to extend it. Hence, by Remark \ref{rmk-xiyixiyi+2}(a), the result follows.
\end{proof}

Remember that there is some odd $j$ such that $x_{k+1}y_j$ or $x_jy_{k+1}$ is red. In what remains, without loss of generality, \textbf{we assume that the edge $x_{k+1}y_j$ is red.}

Note: with that we loose the symmetry between the sets $X$ and $Y$. We did not make this assumption earlier, because we need Claim~\ref{clm-xk+1y1} to hold regardless of which edge between $x_{k+1}y_j$ and $x_jy_{k+1}$ is red.

By \Cref{clm-xk+1y1}, we know that $j \neq 1$, hence all the following claims will be on the assumption that $2 \leq j$. Besides, if $k = n-2$ then the edge $x_{k+2}y_{k+2} = x_ny_n$ belongs to the red path $P$, hence is not blue, contradicting \Cref{clm-xk+2yk+2-A}. Thus if $k = n-2$ the proof ends here. Therefore, from here we have:
\[
        2\leq j < k \le n-3.
\]
This allows us to use indices $j-1$ and $k+3$ (we have not used them so far).

\begin{clm}\label{clm-xj+1y1x2yj+2}
The edges $x_{j+1}y_1$ and $x_2y_{j+2}$ are blue.\\ In particular, when $j=k-1$, this means that $x_ky_1$ and $x_2y_{k+1}$ are blue.
\end{clm}
\begin{proof}
Suppose towards a contradiction that the edge $x_{j+1}y_1$ is red (see Figure~\ref{fig-xj+1y1}). In this case, $G$ can be vertex-partitioned into the red cycle $(x_{k+1}Px_{j+1}, y_1Py_j)$ and the red path $(x_1,y_2Px_{k-1},y_k)$. By Claim~\ref{clm-x1ykxk-1y2}(\ref{clm-x1ykxk-1y2-itemi}), we know that the edges $x_1y_k$ and $x_{k-1}y_2$ are not both blue. Hence, we see that $G$ can be vertex-partitioned into at most $3$ monochromatic cycles, a contradiction. Thus, the edge $x_{j+1}y_1$ is blue.

\begin{figure}[!ht]
\centering
\begin{tikzpicture}[scale=1.0]
\node [vertex, label={[label distance=0cm]270:$x_1$}] (x1) at (0,0) {};
\node [vertex, label={[label distance=0cm]270:$y_2$}] (y2) at (1,0) {};
\node (dotsx) at (2,0) {$\cdots$};
\node [vertex, label={[label distance=0cm]270:$x_{k-1}$}] (xk-1) at (3,0) {};
\node [vertex, label={[label distance=0cm]270:$y_k$}] (yk) at (4,0) {};
\node [vertex, label={[label distance=0cm]270:$x_{k+1}$}] (xk+1) at (5,0) {};
\node (dotsn) at (6.5,0) {$\cdots$};
\node [vertex, label={[label distance=0cm]270:$x_{j+1}$}] (xj+1) at (8,0) {};
\node [vertex, label={[label distance=0cm]270:$y_j$}] (yj) at (9,0) {};
\node (dotsy) at (10,0) {$\cdots$};
\node [vertex, label={[label distance=0cm]270:$y_1$}] (y1) at (11,0) {};

\draw [rededge] (x1)--(y2)--(dotsx)--(xk-1)--(yk)--(xk+1)--(dotsn)--(xj+1)--(yj)--(dotsy)--(y1);
\draw [rededge] (xj+1) to [out=30, in=150] (y1);
\draw [rededge] (xk+1) to [out=30, in=150] (yj);
\end{tikzpicture}
\caption{The case where the edge $x_{j+1}y_1$ is red.}
\label{fig-xj+1y1}
\end{figure}

Now, suppose for sake of contradiction that the edge $x_2y_{j+2}$ is red (see Figure~\ref{fig-x2yj+2}). In this case, the edge $x_1y_k$ is blue, since otherwise $G$ could be vertex-partitioned into the red cycles $(x_{k+1}Py_{j+2},x_2Py_j)$ and $(x_1,y_2Px_{k-1},y_k)$ and the edge-cycle $(x_{j+1},y_1)$. By Claim \ref{clm-x1ykxk-1y2}(\ref{clm-x1ykxk-1y2-itemi}), it follows that the edge $x_{k-1}y_2$ is red. Observe that the edges $x_1y_1$ and $x_{j+1}y_k$ are even and, therefore, blue. Hence, we see that $G$ can be vertex-partitioned into the red cycles $(x_{k+1}Py_{j+2},x_2Py_j)$ and $(y_2Px_{k-1})$ and the blue cycle $(x_1,y_k,x_{j+1},y_1)$, a contradiction. Thus, the edge $x_2y_{j+2}$ is blue.

\begin{figure}[!ht]
\centering
        \begin{tikzpicture}[scale=0.88]
        \node [vertex, label={[label distance=0cm]270:$x_1$}] (x1) at (0,0) {};
        \node [vertex, label={[label distance=0cm]270:$y_2$}] (y2) at (1,0) {};
        \node (dotsx) at (2,0) {$\cdots$};
        \node [vertex, label={[label distance=0cm]270:$x_{k-1}$}] (xk-1) at (3,0) {};
        \node [vertex, label={[label distance=0cm]270:$y_k$}] (yk) at (4,0) {};
        \node [vertex, label={[label distance=0cm]270:$x_{k+1}$}] (xk+1) at (5,0) {};
        \node (dotsn) at (6.5,0) {$\cdots$};
        \node [vertex, label={[label distance=0cm]270:$y_{j+2}$}] (yj+2) at (8,0) {};
        \node [vertex, label={[label distance=0cm]270:$x_{j+1}$}] (xj+1) at (9,0) {};
        \node [vertex, label={[label distance=0cm]270:$y_j$}] (yj) at (10,0) {};
        \node (dotsy) at (11,0) {$\cdots$};
        \node [vertex, label={[label distance=0cm]270:$x_2$}] (x2) at (12,0) {};
        \node [vertex, label={[label distance=0cm]270:$y_1$}] (y1) at (13,0) {};

        \draw [rededge] (x1)--(y2)--(dotsx)--(xk-1)--(yk)--(xk+1)--(dotsn)--(yj+2)--(xj+1)--(yj)--(dotsy)--(x2)--(y1);
        \draw [rededge] (yj+2) to [out=30, in=150] (x2);
        \draw [rededge] (xk+1) to [out=30, in=150] (yj);

        \draw [blueedge] (x1) to [out=20, in=160] (y1);
        \draw [blueedge] (yk) to [out=20, in=160] (xj+1);
        \draw [blueedge] (xj+1) to [out=20, in=160] (y1);

        \end{tikzpicture}
        \caption{The case where the edge $x_2y_{j+2}$ is red.}
        \label{fig-x2yj+2}
\end{figure}
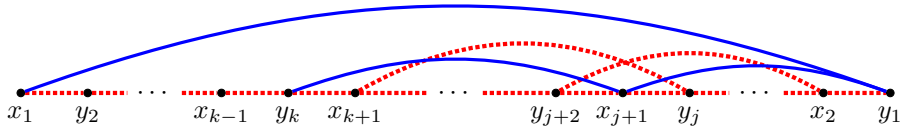
\end{proof}

\begin{clm}\label{clm-xjyk+2xk+2yk-A}
The edges $x_jy_{k+2}$ and $x_{k+2}y_k$ are blue.
\end{clm}
\begin{proof}
Suppose towards a contradiction that the edge $x_jy_{k+2}$ is red (see Figure~\ref{fig-xjyk+2}). In this case, $G$ can be vertex-partitioned into the red cycle $(x_jPx_{k+1},y_jPy_{k+2})$ and the subgraph $G[\Sleq{j-1}]$. But, observe that the subgraph $G[\Sleq{j-1}]$ is a blue even plait. Hence, by \Cref{rmk-evenplait-2cycles}, we see that $G$ can be vertex-partitioned into $3$ monochromatic cycles, a contradiction. Thus, the edge $x_jy_{k+2}$ is blue.

\begin{figure}[!ht]
\centering
\begin{subfigure}[!ht]{1.0\textwidth}
\centering
\begin{tikzpicture}[scale=1.0]
\node [vertex, label={[label distance=0cm]270:$x_j$}] (xj) at (4,0) {};
\node (dotsx) at (5,0) {$\cdots$};
\node [vertex, label={[label distance=0cm]270:$x_{k+1}$}] (xk+1) at (6,0) {};
\node [vertex, label={[label distance=0cm]270:$y_{k+2}$}] (yk+2) at (7,0) {};
\node (dotsn) at (9,0) {$\cdots$};
\node [vertex, label={[label distance=0cm]270:$y_j$}] (yj) at (11,0) {};

\draw [rededge] (xj)--(dotsx)--(xk+1)--(yk+2)--(dotsn)--(yj);
\draw [rededge] (xj) to [out=30, in=150] (yk+2);
\draw [rededge] (xk+1) to [out=30, in=150] (yj);
\end{tikzpicture}
\end{subfigure}

\vspace{0.3cm}

\begin{subfigure}[!ht]{1.0\textwidth}
\centering
\begin{tikzpicture}[yscale=0.7, xscale=1.0]
\node at (-1.5,4.5) {$G_{j-1}^O$};
\node at (4.5,4.5) {$G_{j-1}^O$};
\fill [rounded corners, fill=blue!10] (-1,3.5) rectangle (4,5.5);

\node at (-1.5,7) {$G_{j-1}^E$};
\node at (4.5,7) {$G_{j-1}^E$};
\fill [rounded corners, fill=blue!10] (-1,6) rectangle (4,8);

\node [vertex, label={[label distance=0cm]180:$x_1$}] (x1) at (0,4) {};
\node at (0,4.5) {$\myvdots$};
\node [vertex, label={[label distance=0cm]180:$x_{j-2}$}] (xj-2) at (0,5) {};

\node [vertex, label={[label distance=0cm]180:$x_2$}] (x2) at (0,6.5) {};
\node at (0,7) {$\myvdots$};
\node [vertex, label={[label distance=0cm]180:$x_{j-1}$}] (xj-1) at (0,7.5) {};

\node [vertex, label={[label distance=0cm]0:$y_1$}] (y1) at (3,4) {};
\node at (3,4.5) {$\myvdots$};
\node [vertex, label={[label distance=0cm]0:$y_{j-2}$}] (yj-2) at (3,5) {};

\node [vertex, label={[label distance=0cm]0:$y_2$}] (y2) at (3,6.5) {};
\node at (3,7) {$\myvdots$};
\node [vertex, label={[label distance=0cm]0:$y_{j-1}$}] (yj-1) at (3,7.5) {};

\end{tikzpicture}
\end{subfigure}
\caption{The case where the edge $x_jy_{k+2}$ is red.}
\label{fig-xjyk+2}
\end{figure}


Now, suppose for sake of contradiction that $x_{k+2}y_k$ is red. Take the red cycle $(x_{k+1},y_{k+2}Px_{k+2},y_k)$, that covers $\Sgeq{k+2} \cup \{x_{k+1}, y_k\}$. We claim that the remaining vertices of $G$ can be vertex-partitioned into at most $3$ blue cycles. Indeed, if $j=k-1$, by \Cref{clm-xj+1y1x2yj+2} the edges $x_ky_1$ and $x_2y_{k+1}$ are blue and by \Cref{clm-xk+1y1} $x_1 y_{k+1}$ is blue  (see Figure~\ref{fig-xk+2yk-a}).
Take only the blue cycle
  $$x_2\To[E_k\setminus y_k] x_k\To y_1\To[O_k] x_1 \To y_{k+1} \To x_2.$$

On the other hand, if $j<k-1$, then $y_{j+2}\in G_k^O$ (see Figure~\ref{fig-xk+2yk-b}).
Again by \Cref{clm-xj+1y1x2yj+2} and \Cref{clm-xk+1y1}, we can form the blue cycle 
        $$x_2\To[E_k\setminus y_k] x_{j+1} \To y_1 \To[O_k\setminus x_1] y_{j+2} \To x_2.$$
We also take the edge-cycle $(x_1,y_{k+1})$.
\begin{figure}[!ht]
\centering
\begin{subfigure}[!ht]{0.5\textwidth}
\centering
\begin{tikzpicture}[yscale=0.7, xscale=1.0]
\node at (-1.5,5) {$G_k^O$};
\fill [rounded corners, fill=blue!10] (-1,3.5) rectangle (4,6.5);

\node at (-1.5,8.5) {$G_k^E$};
\fill [rounded corners, fill=blue!10] (-1,7) rectangle (4,10);

\node [vertex, label={[label distance=0cm]180:$x_1$}] (x1) at (0,4) {};
\node at (0,5) {$\myvdots$};
\node [vertex, label={[label distance=0cm]180:$x_{k-1}$}] (xk-1) at (0,6) {};

\node [vertex, label={[label distance=0cm]180:$x_2$}] (x2) at (0,7.5) {};
\node at (0,8.5) {$\myvdots$};
\node [vertex, label={[label distance=0cm]180:$x_k$}] (xk) at (0,9.5) {};

\node [vertex, label={[label distance=0cm]180:$x_{k+1}$}] (xk+1) at (0,11) {};
\node [vertex, label={[label distance=0cm]180:$x_{k+2}$}] (xk+2) at (0,12) {};

\node (dots) at (1.5,13) {$\cdots$};

\node [vertex, label={[label distance=0cm]0:$y_1$}] (y1) at (3,4) {};
\node at (3,5) {$\myvdots$};
\node [vertex, label={[label distance=0cm]0:$y_{k-1}$}] (yk-1) at (3,6) {};

\node [vertex, label={[label distance=0cm]0:$y_2$}] (y2) at (3,7.5) {};
\node at (3,8.5) {$\myvdots$};
\node [vertex, label={[label distance=0cm]0:$y_k$}] (yk) at (3,9.5) {};

\node [vertex, label={[label distance=0cm]0:$y_{k+1}$}] (yk+1) at (3,11) {};
\node [vertex, label={[label distance=0cm]0:$y_{k+2}$}] (yk+2) at (3,12) {};

\draw [rededge] (xk+2)--(yk)--(xk+1)--(yk+2);
\draw [rededge] (xk+2) .. controls (3,13) .. (dots);
\draw [rededge] (yk+2) .. controls (0,13) .. (dots);
\draw [blueedge] (x1)--(yk+1)--(x2) (xk)--(y1);

\end{tikzpicture}
\caption{$j=k-1$}
\label{fig-xk+2yk-a}
\end{subfigure}%
\begin{subfigure}[!ht]{0.5\textwidth}
\centering
\begin{tikzpicture}[yscale=0.7, xscale=1.0]
\node at (4.5,5) {$G_k^O$};
\fill [rounded corners, fill=blue!10] (-1,3.5) rectangle (4,6.5);

\node at (4.5,8.5) {$G_k^E$};
\fill [rounded corners, fill=blue!10] (-1,7) rectangle (4,10);

\node [vertex, label={[label distance=0cm]180:$x_1$}] (x1) at (0,4) {};
\node at (0,5) {$\myvdots$};
\node [vertex, label={[label distance=0cm]180:$x_{k-1}$}] (xk-1) at (0,6) {};

\node [vertex, label={[label distance=0cm]180:$x_2$}] (x2) at (0,7.5) {};
\node at (0,8) {$\myvdots$};
\node [vertex, label={[label distance=0cm]180:$x_{j+1}$}] (xj+1) at (0,8.5) {};
\node at (0,9) {$\myvdots$};
\node [vertex, label={[label distance=0cm]180:$x_k$}] (xk) at (0,9.5) {};

\node [vertex, label={[label distance=0cm]180:$x_{k+1}$}] (xk+1) at (0,11) {};
\node [vertex, label={[label distance=0cm]180:$x_{k+2}$}] (xk+2) at (0,12) {};

\node (dots) at (1.5,13) {$\cdots$};

\node [vertex, label={[label distance=0cm]0:$y_1$}] (y1) at (3,4) {};
\node at (3,4.5) {$\myvdots$};
\node [vertex, label={[label distance=0cm]0:$y_{j+2}$}] (yj+2) at (3,5) {};
\node at (3,5.5) {$\myvdots$};
\node [vertex, label={[label distance=0cm]0:$y_{k-1}$}] (yk-1) at (3,6) {};

\node [vertex, label={[label distance=0cm]0:$y_2$}] (y2) at (3,7.5) {};
\node at (3,8.5) {$\myvdots$};
\node [vertex, label={[label distance=0cm]0:$y_k$}] (yk) at (3,9.5) {};

\node [vertex, label={[label distance=0cm]0:$y_{k+1}$}] (yk+1) at (3,11) {};
\node [vertex, label={[label distance=0cm]0:$y_{k+2}$}] (yk+2) at (3,12) {};

\draw [rededge] (xk+2)--(yk)--(xk+1)--(yk+2);
\draw [rededge] (xk+2) .. controls (3,13) .. (dots);
\draw [rededge] (yk+2) .. controls (0,13) .. (dots);
\draw [blueedge] (x1)--(yk+1) (y1)--(xj+1) (x2)--(yj+2);

\end{tikzpicture}
\caption{$j<k-1$}
\label{fig-xk+2yk-b}
\end{subfigure}

\caption{The case where the edge $x_{k+2}y_k$ is red.}
\label{fig-xk+2yk}
\end{figure}
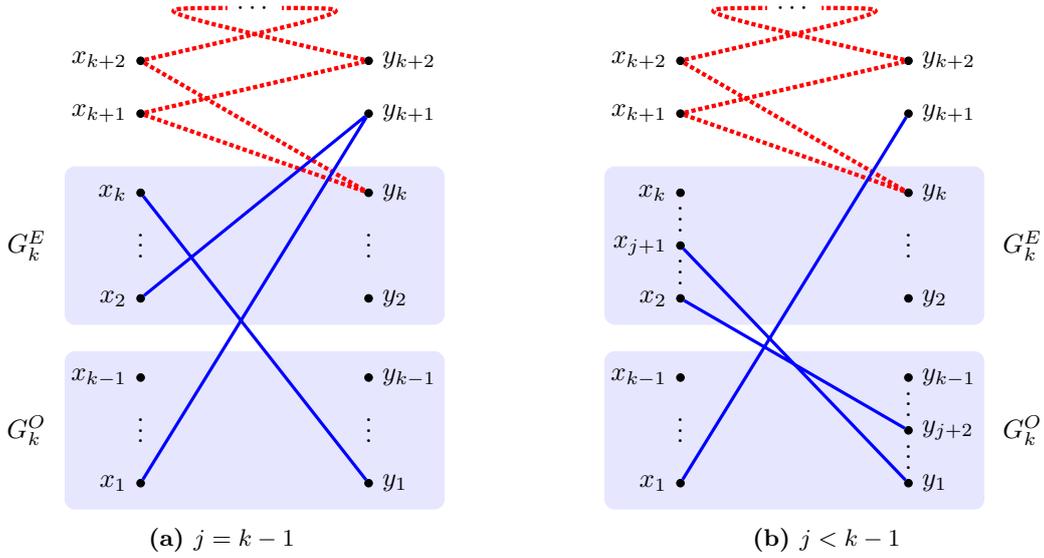
\end{proof}

\begin{clm}\label{clm-GSk+2-A}
The subgraph $G[\Sleq{k+2}]$ has a blue Hamiltonian cycle that uses the edges $x_{k+1}y_{k+1}$, $x_{k+2}y_{k+2}$ and $x_{k+2}y_k$. Furthermore, the edge $x_{k+3}y_{k+3}$ is blue.
\end{clm}
\begin{proof}
Let us build such a cycle (see \Cref{fig-GSk+2}). We will use the blue edges $x_{j+1}y_1$ (\Cref{clm-xj+1y1x2yj+2}), $x_{k+1}y_1$ and $x_1y_{k+1}$ (\Cref{clm-xk+1y1}), $x_{k+1}y_{k+1}$ (Remark~\ref{rmk-xiyixiyi+2}(a)), $x_{k+2}y_k$ (\Cref{clm-xjyk+2xk+2yk-A}), $x_{k+2}y_{k+2}$ (\Cref{clm-xk+2yk+2-A}), and $x_jy_{k+2}$ (\Cref{clm-xjyk+2xk+2yk-A}).

First, take the cycle
$$C = x_{j+1}\To[E_k] y_k \To x_{k+2} \To y_{k+2} \To x_j \To[O_k\setminus y_1] x_1 \To y_1 \To x_{j+1}.$$
It is a blue cycle covering all vertices of $\Sleq{k}\cup \{x_{k+2},y_{k+2}\}$ and it uses the edge $x_1y_1$. So $C' = C - x_1y_1 +x_1y_{k+1}x_{k+1}y_1$ is a blue cycle which satisfies the first part of the claim. By Remark \ref{rmk-xiyixiyi+2}(a), it follows that the edge $x_{k+3}y_{k+3}$ is blue.
\end{proof}

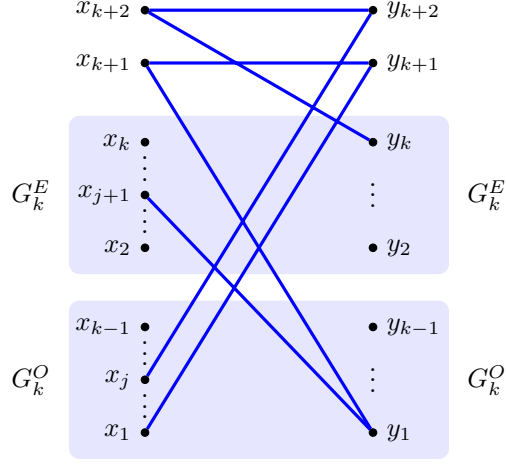
\begin{figure}[!ht]
\centering
\begin{tikzpicture}[yscale=0.7, xscale=1.0]
\node at (-1.5,5) {$G_k^O$};
\node at (4.5,5) {$G_k^O$};
\fill [rounded corners, fill=blue!10] (-1,3.5) rectangle (4,6.5);

\node at (-1.5,8.5) {$G_k^E$};
\node at (4.5,8.5) {$G_k^E$};
\fill [rounded corners, fill=blue!10] (-1,7) rectangle (4,10);

\node [vertex, label={[label distance=0cm]180:$x_1$}] (x1) at (0,4) {};
\node at (0,4.5) {$\myvdots$};
\node [vertex, label={[label distance=0cm]180:$x_j$}] (xj) at (0,5) {};
\node at (0,5.5) {$\myvdots$};
\node [vertex, label={[label distance=0cm]180:$x_{k-1}$}] (xk-1) at (0,6) {};

\node [vertex, label={[label distance=0cm]180:$x_2$}] (x2) at (0,7.5) {};
\node at (0,8) {$\myvdots$};
\node [vertex, label={[label distance=0cm]180:$x_{j+1}$}] (xj+1) at (0,8.5) {};
\node at (0,9) {$\myvdots$};
\node [vertex, label={[label distance=0cm]180:$x_k$}] (xk) at (0,9.5) {};

\node [vertex, label={[label distance=0cm]180:$x_{k+1}$}] (xk+1) at (0,11) {};
\node [vertex, label={[label distance=0cm]180:$x_{k+2}$}] (xk+2) at (0,12) {};

\node [vertex, label={[label distance=0cm]0:$y_1$}] (y1) at (3,4) {};
\node at (3,5) {$\myvdots$};
\node [vertex, label={[label distance=0cm]0:$y_{k-1}$}] (yk-1) at (3,6) {};

\node [vertex, label={[label distance=0cm]0:$y_2$}] (y2) at (3,7.5) {};
\node at (3,8.5) {$\myvdots$};
\node [vertex, label={[label distance=0cm]0:$y_k$}] (yk) at (3,9.5) {};

\node [vertex, label={[label distance=0cm]0:$y_{k+1}$}] (yk+1) at (3,11) {};
\node [vertex, label={[label distance=0cm]0:$y_{k+2}$}] (yk+2) at (3,12) {};

\draw [blueedge] (x1)--(yk+1)--(xk+1)--(y1)--(xj+1) (xj)--(yk+2)--(xk+2)--(yk);
\end{tikzpicture}
\caption{A blue Hamiltonian cycle of $G[\Sleq{k+2}]$.}
\label{fig-GSk+2}
\end{figure}

As before, if $k = n-3$ the proof ends here since we have just proved that the edge $x_ny_n$ is blue, whereas it belongs to the red path $P$. This is a contradiction. Hence, in the remaining claims the following inequality holds \[
k \le n-4.      
\]
This allows us to use indices up to $k+4$.

\begin{clm}\label{clm-xjyk+1-A}
The edge $x_jy_{k+1}$ is blue.
\end{clm}

\begin{proof}
Suppose for sake of contradiction that the edge $x_jy_{k+1}$ is red. In this case, we shall prove that $\Sleq{k+2}$ is a blue special set of $G$. Now \Cref{clm-xk+1y1} tells us that the edges $x_{k+1}y_1$ and $x_1y_{k+1}$ are blue. \Cref{clm-xj+1y1x2yj+2} tells us that the edges $x_{j+1}y_1$ and $x_2y_{j+2}$ are blue too. Also, since here we assume that the edge $x_jy_{k+1}$ is red we can use the `alternate' version of \Cref{clm-xj+1y1x2yj+2} consisting in exchanging $X$ and $Y$, therefore the edges $x_1y_{j+1}$ and $x_{j+2}y_2$ are also blue. Figure~\ref{fig-xjyk+1} shows all those edges.

On the one hand, if $j=k-1$ then this means that the edges $x_2y_{k+1}$ and $x_{k+1}y_2$ are blue.  Then $x_2 \To[E_k] y_2 \To x_{k+1} \To y_1 \To[O_k] x_1 \To y_{k+1} \To x_2$ is a blue Hamiltonian cycle of $G[\Sleq{k+1}]$.
On the other hand if $j<k-1$ then $x_{j+2},y_{j+2}\in G_k^O$. Start with the following blue Hamiltonian cycle of $G[\Sleq{k}]$:
$C = x_2 \To[E_k] y_2 \To x_{j+2} \To[O_k] y_{j+2} \To x_2$. We can require it to use the edge $x_1y_1$ ($G_k^O$ is a blue bipartite complete graph, hence $x_{j+2} \To[O_k] y_{j+2}$ can be required to use $x_1y_1$). Therefore $C' = C-x_1y_1 +x_1y_{x+1}x_{k+1}y_1$ is a blue Hamiltonian cycle of $G[\Sleq{k+1}]$.

In both cases, by Claim \ref{clm-GSk+2-A}, we see that $\Sleq{k+2}$ is a blue special set of $G$, a contradiction. Thus, the result follows.
\end{proof}

\begin{figure}[!ht]
\centering
\begin{subfigure}[!ht]{0.5\textwidth}
\centering
\begin{tikzpicture}[yscale=0.7, xscale=1.0]
        \node at (-1.5,5) {$G_k^O$};
        \fill [rounded corners, fill=blue!10] (-1,3.5) rectangle (4,6.5);

        \node at (-1.5,8.5) {$G_k^E$};
        \fill [rounded corners, fill=blue!10] (-1,7) rectangle (4,10);

        \node [vertex, label={[label distance=0cm]180:$x_1$}] (x1) at (0,4) {};
        \node at (0,5) {$\myvdots$};
        \node [vertex, label={[label distance=0cm]180:$x_{k-1}$}] (xk-1) at (0,6) {};

        \node [vertex, label={[label distance=0cm]180:$x_2$}] (x2) at (0,7.5) {};
        \node at (0,8.5) {$\myvdots$};
        \node [vertex, label={[label distance=0cm]180:$x_k$}] (xk) at (0,9.5) {};

        \node [vertex, label={[label distance=0cm]180:$x_{k+1}$}] (xk+1) at (0,11) {};

        \node [vertex, label={[label distance=0cm]0:$y_1$}] (y1) at (3,4) {};
        \node at (3,5) {$\myvdots$};
        \node [vertex, label={[label distance=0cm]0:$y_{k-1}$}] (yk-1) at (3,6) {};

        \node [vertex, label={[label distance=0cm]0:$y_2$}] (y2) at (3,7.5) {};
        \node at (3,8.5) {$\myvdots$};
        \node [vertex, label={[label distance=0cm]0:$y_k$}] (yk) at (3,9.5) {};

        \node [vertex, label={[label distance=0cm]0:$y_{k+1}$}] (yk+1) at (3,11) {};

        \draw [blueedge] (yk)--(x1)--(yk+1) (xk+1)--(y1)--(xk) (xk+1)--(y2) (yk+1)--(x2);
        \end{tikzpicture}
\caption{$j=k-1$}
\label{fig-xjyk+1-a}
\end{subfigure}%
\begin{subfigure}[!ht]{0.5\textwidth}
\centering
\begin{tikzpicture}[yscale=0.7, xscale=1.0]
        \node at (4.5,5) {$G_k^O$};
        \fill [rounded corners, fill=blue!10] (-1,3.5) rectangle (4,6.5);

        \node at (4.5,8.5) {$G_k^E$};
        \fill [rounded corners, fill=blue!10] (-1,7) rectangle (4,10);

        \node [vertex, label={[label distance=0cm]180:$x_1$}] (x1) at (0,4) {};
        \node at (0,4.5) {$\myvdots$};
        \node [vertex, label={[label distance=0cm]180:$x_{j+2}$}] (xj+2) at (0,5) {};
        \node at (0,5.5) {$\myvdots$};
        \node [vertex, label={[label distance=0cm]180:$x_{k-1}$}] (xk-1) at (0,6) {};

        \node [vertex, label={[label distance=0cm]180:$x_2$}] (x2) at (0,7.5) {};
        \node at (0,8) {$\myvdots$};
        \node [vertex, label={[label distance=0cm]180:$x_{j+1}$}] (xj+1) at (0,8.5) {};
        \node at (0,9) {$\myvdots$};
        \node [vertex, label={[label distance=0cm]180:$x_k$}] (xk) at (0,9.5) {};

        \node [vertex, label={[label distance=0cm]180:$x_{k+1}$}] (xk+1) at (0,11) {};

        \node [vertex, label={[label distance=0cm]0:$y_1$}] (y1) at (3,4) {};
        \node at (3,4.5) {$\myvdots$};
        \node [vertex, label={[label distance=0cm]0:$y_{j+2}$}] (yj+2) at (3,5) {};
        \node at (3,5.5) {$\myvdots$};
        \node [vertex, label={[label distance=0cm]0:$y_{k-1}$}] (yk-1) at (3,6) {};

        \node [vertex, label={[label distance=0cm]0:$y_2$}] (y2) at (3,7.5) {};
        \node at (3,8) {$\myvdots$};
        \node [vertex, label={[label distance=0cm]0:$y_{j+1}$}] (yj+1) at (3,8.5) {};
        \node at (3,9) {$\myvdots$};
        \node [vertex, label={[label distance=0cm]0:$y_k$}] (yk) at (3,9.5) {};

        \node [vertex, label={[label distance=0cm]0:$y_{k+1}$}] (yk+1) at (3,11) {};

        \draw [blueedge] (yj+1)--(x1)--(yk+1)--(xk+1)--(y1)--(xj+1) (xj+2)--(y2) (yj+2)--(x2);
\end{tikzpicture}
\caption{$j<k-1$}
\label{fig-xjyk+1-b}
\end{subfigure}
\caption{The case where the edge $x_jy_{k+1}$ is red.}
\label{fig-xjyk+1}
\end{figure}

\begin{clm}\label{clm-GSk-1xkyk+1-A}
There is a blue cycle in $G$ that covers all vertices in $\Sleq{k+1}$ except $x_{k+1}$ and $y_k$.
\end{clm}

\begin{proof}
By \Cref{clm-xj+1y1x2yj+2} and \Cref{clm-xk+1y1} the edges $x_{j+1}y_1$, $x_2y_{j+2}$ and $x_1y_{k+1}$ are blue.

On the one hand if $j=k-1$, this means that the edges $x_ky_1$ and $x_2y_{k+1}$ are blue (see Figure~\ref{fig-GSk-1xkyk+1-a}). Then 
$x_2\To[E_k\setminus y_k] x_k \To y_1 \To[O_k] x_1 \To y_{k+1} \To x_2$ is a blue cycle covering $y_{k+1}$ and all vertices of $\Sleq{k}$ except $y_k$.

On the other hand ($j < k-1$), $y_{j+2}\in G_k^O$. We will also need the edge $x_jy_{k+1}$, which is blue by \Cref{clm-xjyk+1-A}. Let
$$C = x_2 \To[E_k\setminus y_k] x_{j+1} \To y_1 \To x_1 \To y_{k+1} \To x_j \To y_{j+2} \To x_2.$$ This cycle covers $G_k^E \setminus y_k$ and passes through the vertices $x_1, y_1, x_j, y_{j+2}$ in $G_k^O$. Now, we can replace the edge $x_1y_1$ of $C$ with a path that covers all remaining vertices of $G_k^O$.
\end{proof}

\begin{figure}[!ht]
\centering
\begin{subfigure}[!ht]{0.5\textwidth}
\centering
\begin{tikzpicture}[yscale=0.7, xscale=1.0]
\node at (-1.5,5) {$G_k^O$};
\fill [rounded corners, fill=blue!10] (-1,3.5) rectangle (4,6.5);

\node at (-1.5,8.5) {$G_k^E$};
\fill [rounded corners, fill=blue!10] (-1,7) rectangle (4,10);

\node [vertex, label={[label distance=0cm]180:$x_1$}] (x1) at (0,4) {};
\node at (0,5) {$\myvdots$};
\node [vertex, label={[label distance=0cm]180:$x_{k-1}$}] (xk-1) at (0,6) {};

\node [vertex, label={[label distance=0cm]180:$x_2$}] (x2) at (0,7.5) {};
\node at (0,8.5) {$\myvdots$};
\node [vertex, label={[label distance=0cm]180:$x_k$}] (xk) at (0,9.5) {};

\node [vertex, label={[label distance=0cm]180:$x_{k+1}$}] (xk+1) at (0,11) {};

\node [vertex, label={[label distance=0cm]0:$y_1$}] (y1) at (3,4) {};
\node at (3,5) {$\myvdots$};
\node [vertex, label={[label distance=0cm]0:$y_{k-1}$}] (yk-1) at (3,6) {};

\node [vertex, label={[label distance=0cm]0:$y_2$}] (y2) at (3,7.5) {};
\node at (3,8.5) {$\myvdots$};
\node [vertex, label={[label distance=0cm]0:$y_k$}] (yk) at (3,9.5) {};

\node [vertex, label={[label distance=0cm]0:$y_{k+1}$}] (yk+1) at (3,11) {};

\draw [blueedge] (x1)--(yk+1)--(x2) (y1)--(xk);

\end{tikzpicture}
\caption{$j=k-1$}
\label{fig-GSk-1xkyk+1-a}
\end{subfigure}%
\begin{subfigure}[!ht]{0.5\textwidth}
\centering
\begin{tikzpicture}[yscale=0.7, xscale=1.0]
\node at (4.5,5) {$G_k^O$};
\fill [rounded corners, fill=blue!10] (-1,3.5) rectangle (4,6.5);

\node at (4.5,8.5) {$G_k^E$};
\fill [rounded corners, fill=blue!10] (-1,7) rectangle (4,10);

\node [vertex, label={[label distance=0cm]180:$x_1$}] (x1) at (0,4) {};
\node at (0,4.5) {$\myvdots$};
\node [vertex, label={[label distance=0cm]180:$x_j$}] (xj) at (0,5) {};
\node at (0,5.5) {$\myvdots$};
\node [vertex, label={[label distance=0cm]180:$x_{k-1}$}] (xk-1) at (0,6) {};

\node [vertex, label={[label distance=0cm]180:$x_2$}] (x2) at (0,7.5) {};
\node at (0,8) {$\myvdots$};
\node [vertex, label={[label distance=0cm]180:$x_{j+1}$}] (xj+1) at (0,8.5) {};
\node at (0,9) {$\myvdots$};
\node [vertex, label={[label distance=0cm]180:$x_k$}] (xk) at (0,9.5) {};

\node [vertex, label={[label distance=0cm]180:$x_{k+1}$}] (xk+1) at (0,11) {};

\node [vertex, label={[label distance=0cm]0:$y_1$}] (y1) at (3,4) {};
\node at (3,4.5) {$\myvdots$};
\node [vertex, label={[label distance=0cm]0:$y_{j+2}$}] (yj+2) at (3,5) {};
\node at (3,5.5) {$\myvdots$};
\node [vertex, label={[label distance=0cm]0:$y_{k-1}$}] (yk-1) at (3,6) {};

\node [vertex, label={[label distance=0cm]0:$y_2$}] (y2) at (3,7.5) {};
\node at (3,8.5) {$\myvdots$};
\node [vertex, label={[label distance=0cm]0:$y_k$}] (yk) at (3,9.5) {};

\node [vertex, label={[label distance=0cm]0:$y_{k+1}$}] (yk+1) at (3,11) {};

\draw [blueedge] (x1)--(yk+1)--(xj) (xj+1)--(y1) (yj+2)--(x2);

\end{tikzpicture}
\caption{$j<k-1$}
\label{fig-GSk-1xkyk+1-b}
\end{subfigure}

\caption{A blue cycle that covers all vertices in $\Sleq{k+1}$ except $x_{k+1}$ and $y_k$.}
\label{fig-GSk-1xkyk+1}
\end{figure}

\begin{clm}\label{clm-xk+1yk+3-A}
The edge $x_{k+1}y_{k+3}$ is blue.
\end{clm}
\begin{proof}
Suppose towards a contradiction that the edge $x_{k+1}y_{k+3}$ is red (see Figure~\ref{fig-xk+1yk+3}). Hence, by \Cref{clm-GSk-1xkyk+1-A}, we see that $G$ can be vertex-partitioned into the red cycle $(x_{k+1},y_{k+3}Px_{k+3},y_{k+2})$, the edge-cycle $(x_{k+2},y_k)$ and a blue cycle that covers all vertices in $\Sleq{k+1}$ except $x_{k+1}$ and $y_k$, a contradiction. Thus, the result follows.
\begin{figure}[!ht]
\centering
\begin{tikzpicture}[yscale=0.8, xscale=1.0]
\fill [rounded corners, fill=blue!10] (-1,4.5) rectangle (4,6.5);
\fill [rounded corners, fill=blue!10] (-1,6.5) rectangle (2,7.5);
\fill [rounded corners, fill=blue!10] (1,7.5) rectangle (4,8.5);
\fill [rounded corners, fill=blue!10] (-1,6) rectangle (2,7);
\fill [rounded corners, fill=blue!10] (0,6) rectangle (3,8);
\fill [rounded corners, fill=blue!0] (-1,7.5) rectangle (1,8.5);
\fill [rounded corners, fill=blue!0] (2,6.5) rectangle (4,7.5);

\node [vertex, label={[label distance=0cm]180:$x_1$}] (x1) at (0,5) {};
\node at (0,5.5) {$\myvdots$};
\node [vertex, label={[label distance=0cm]180:$x_{k-1}$}] (xk-1) at (0,6) {};

\node [vertex, label={[label distance=0cm]180:$x_k$}] (xk) at (0,7) {};

\node [vertex, label={[label distance=0cm]180:$x_{k+1}$}] (xk+1) at (0,8) {};
\node [vertex, label={[label distance=0cm]180:$x_{k+2}$}] (xk+2) at (0,9) {};
\node [vertex, label={[label distance=0cm]180:$x_{k+3}$}] (xk+3) at (0,10) {};

\node (dots) at (1.5,11) {$\cdots$};

\node [vertex, label={[label distance=0cm]0:$y_1$}] (y1) at (3,5) {};
\node at (3,5.5) {$\myvdots$};
\node [vertex, label={[label distance=0cm]0:$y_{k-1}$}] (yk-1) at (3,6) {};

\node [vertex, label={[label distance=0cm]0:$y_k$}] (yk) at (3,7) {};

\node [vertex, label={[label distance=0cm]0:$y_{k+1}$}] (yk+1) at (3,8) {};
\node [vertex, label={[label distance=0cm]0:$y_{k+2}$}] (yk+2) at (3,9) {};
\node [vertex, label={[label distance=0cm]0:$y_{k+3}$}] (yk+3) at (3,10) {};

\draw [rededge] (xk+3)--(yk+2)--(xk+1)--(yk+3);
\draw [rededge] (xk+3) .. controls (3,11) .. (dots);
\draw [rededge] (yk+3) .. controls (0,11) .. (dots);
\draw [blueedge] (xk+2)--(yk);

\end{tikzpicture}
\caption{The case where the edge $x_{k+1}y_{k+3}$ is red.}
\label{fig-xk+1yk+3}
\end{figure}
\end{proof}

\begin{clm}\label{clm-x1ykxk+1y2}
The edges $x_1y_k$ and $x_{k+1}y_2$ are not both blue.
\end{clm}

\begin{proof}
Suppose towards a contradiction that the edges $x_1y_k$ and $x_{k+1}y_2$ are blue (see Figure~\ref{fig-x1ykxk+1y2}). In this case, we shall prove that $\Sleq{k+2}$ is a blue special set of $G$, a contradiction. The edge $x_1y_{k+1}$ is blue by \Cref{clm-xk+1y1} and $x_{j+1}y_1$ is blue by \Cref{clm-xj+1y1x2yj+2}. So,
$ x_{j+1} \To[E_k] y_k \To x_1 \To[O_k] y_1 \To x_{j+1}$ is a blue Hamiltonian cycle of $G[\Sleq{k}]$.\\And $x_{j+1} \To[E_k] y_2 \To x_{k+1} \To y_{k+1} \To x_1  \To[O_k] y_1 \To x_{j+1}$ is a blue Hamiltonian cycle of $G[\Sleq{k+1}]$. 
Together with Claim \ref{clm-GSk+2-A}, we see that $\Sleq{k+2}$ is a blue special set of $G$. Thus, the result follows.

\begin{figure}[!ht]
\centering
\begin{tikzpicture}[yscale=0.7, xscale=1.0]
\node at (-1.5,5) {$G_k^O$};
\node at (4.5,5) {$G_k^O$};
\fill [rounded corners, fill=blue!10] (-1,3.5) rectangle (4,6.5);

\node at (-1.5,8.5) {$G_k^E$};
\node at (4.5,8.5) {$G_k^E$};
\fill [rounded corners, fill=blue!10] (-1,7) rectangle (4,10);

\node [vertex, label={[label distance=0cm]180:$x_1$}] (x1) at (0,4) {};
\node at (0,5) {$\myvdots$};
\node [vertex, label={[label distance=0cm]180:$x_{k-1}$}] (xk-1) at (0,6) {};

\node [vertex, label={[label distance=0cm]180:$x_2$}] (x2) at (0,7.5) {};
\node at (0,8) {$\myvdots$};
\node [vertex, label={[label distance=0cm]180:$x_{j+1}$}] (xj+1) at (0,8.5) {};
\node at (0,9) {$\myvdots$};
\node [vertex, label={[label distance=0cm]180:$x_k$}] (xk) at (0,9.5) {};

\node [vertex, label={[label distance=0cm]180:$x_{k+1}$}] (xk+1) at (0,11) {};

\node [vertex, label={[label distance=0cm]0:$y_1$}] (y1) at (3,4) {};
\node at (3,5) {$\myvdots$};
\node [vertex, label={[label distance=0cm]0:$y_{k-1}$}] (yk-1) at (3,6) {};

\node [vertex, label={[label distance=0cm]0:$y_2$}] (y2) at (3,7.5) {};
\node at (3,8.5) {$\myvdots$};
\node [vertex, label={[label distance=0cm]0:$y_k$}] (yk) at (3,9.5) {};

\node [vertex, label={[label distance=0cm]0:$y_{k+1}$}] (yk+1) at (3,11) {};

\draw [blueedge] (yk)--(x1)--(yk+1)--(xk+1)--(y2) (xj+1)--(y1);

\end{tikzpicture}
\caption{The case where the edges $x_1y_k$ and $x_{k+1}y_2$ are blue.}
\label{fig-x1ykxk+1y2}
\end{figure}
\end{proof}

\begin{clm}\label{clm-x2yk+2}
The edge $x_2y_{k+2}$ is blue.
\end{clm}

\begin{proof}
Suppose for sake of contradiction that the edge $x_2y_{k+2}$ is red (see Figure~\ref{fig-x2yk+2}). In this case, $G$ can be vertex-partitioned into the red cycle $(x_2Py_{k+2})$, the red path $(x_1,y_2Py_k,x_{k+1})$ and the vertex $y_1$. By Claim \ref{clm-x1ykxk+1y2}, we know that the edges $x_1y_k$ and $x_{k+1}y_2$ are not both blue. Hence, we see that $G$ can be vertex-partitioned into $3$ monochromatic cycles (one of which is an edge-cycle), a contradiction. Thus, the result follows.

\begin{figure}[!ht]
\centering
\begin{tikzpicture}[scale=1.0]
\node [vertex, label={[label distance=0cm]270:$x_1$}] (x1) at (-1,0) {};
\node [vertex, label={[label distance=0cm]270:$y_2$}] (y2) at (0,0) {};
\node (dotsx) at (1,0) {$\cdots$};
\node [vertex, label={[label distance=0cm]270:$y_k$}] (yk) at (2,0) {};
\node [vertex, label={[label distance=0cm]270:$x_{k+1}$}] (xk+1) at (3,0) {};
\node [vertex, label={[label distance=0cm]270:$y_{k+2}$}] (yk+2) at (4,0) {};
\node (dotsn) at (6.5,0) {$\cdots$};
\node [vertex, label={[label distance=0cm]270:$x_2$}] (x2) at (9,0) {};
\node [vertex, label={[label distance=0cm]270:$y_1$}] (y1) at (10,0) {};

\draw [rededge] (x1)--(y2)--(dotsx)--(yk)--(xk+1)--(yk+2)--(dotsn)--(x2)--(y1);
\draw [rededge] (yk+2) to [out=30, in=150] (x2);
\end{tikzpicture}
\caption{The case where the edge $x_2y_{k+2}$ is red.}
\label{fig-x2yk+2}
\end{figure}
\end{proof}

\begin{clm}\label{clm-xk+3yk+1-A}
The edge $x_{k+3}y_{k+1}$ is blue.
\end{clm}
\begin{proof}
Suppose towards a contradiction that the edge $x_{k+3}y_{k+1}$ is red (see Figure~\ref{fig-xk+3yk+1}). By \Cref{clm-xj+1y1x2yj+2} and \Cref{clm-x2yk+2} the edges $x_{j+1}y_1$ and $x_2y_{k+2}$ are blue. Let 
$$C = x_{j+1}\To[E_k\setminus y_k] x_2 \To y_{k+2} \To x_j \To[O_k] y_1 \To x_{j+1}.$$ $C$ is a blue cycle that covers $(\Sleq{k} \setminus y_k) \cup \{y_{k+2}\}$. Then $G$ can be vertex-partitioned into the red cycle $(x_{k+2},y_{k+3}Px_{k+3},y_{k+1})$, the edge-cycle $(x_{k+1},y_k)$ and $C$, a contradiction. Thus, the result follows.
\end{proof}

\begin{figure}[!ht]
\centering
        \begin{tikzpicture}[yscale=0.7, xscale=1.0]
        \node at (-1.5,5) {$G_k^O$};
        \node at (4.5,5) {$G_k^O$};
        \fill [rounded corners, fill=blue!10] (-1,3.5) rectangle (4,6.5);

        \node at (-1.5,8.5) {$G_k^E$};
        \node at (4.5,8.5) {$G_k^E$};
        \fill [rounded corners, fill=blue!10] (-1,7) rectangle (4,10);

        \node [vertex, label={[label distance=0cm]180:$x_1$}] (x1) at (0,4) {};
        \node at (0,4.5) {$\myvdots$};
        \node [vertex, label={[label distance=0cm]180:$x_j$}] (xj) at (0,5) {};
        \node at (0,5.5) {$\myvdots$};
        \node [vertex, label={[label distance=0cm]180:$x_{k-1}$}] (xk-1) at (0,6) {};

        \node [vertex, label={[label distance=0cm]180:$x_2$}] (x2) at (0,7.5) {};
        \node at (0,8) {$\myvdots$};
        \node [vertex, label={[label distance=0cm]180:$x_{j+1}$}] (xj+1) at (0,8.5) {};
        \node at (0,9) {$\myvdots$};
        \node [vertex, label={[label distance=0cm]180:$x_k$}] (xk) at (0,9.5) {};

        \node [vertex, label={[label distance=0cm]180:$x_{k+1}$}] (xk+1) at (0,11) {};
        \node [vertex, label={[label distance=0cm]180:$x_{k+2}$}] (xk+2) at (0,12) {};
        \node [vertex, label={[label distance=0cm]180:$x_{k+3}$}] (xk+3) at (0,13) {};

        \node (dots) at (1.5,14) {$\cdots$};

        \node [vertex, label={[label distance=0cm]0:$y_1$}] (y1) at (3,4) {};
        \node at (3,5) {$\myvdots$};
        \node [vertex, label={[label distance=0cm]0:$y_{k-1}$}] (yk-1) at (3,6) {};

        \node [vertex, label={[label distance=0cm]0:$y_2$}] (y2) at (3,7.5) {};
        \node at (3,8.5) {$\myvdots$};
        \node [vertex, label={[label distance=0cm]0:$y_k$}] (yk) at (3,9.5) {};

        \node [vertex, label={[label distance=0cm]0:$y_{k+1}$}] (yk+1) at (3,11) {};
        \node [vertex, label={[label distance=0cm]0:$y_{k+2}$}] (yk+2) at (3,12) {};
        \node [vertex, label={[label distance=0cm]0:$y_{k+3}$}] (yk+3) at (3,13) {};

        \draw [rededge] (xk+3)--(yk+1)--(xk+2)--(yk+3) (xk+1)--(yk);
        \draw [rededge] (xk+3) .. controls (3,14) .. (dots);
        \draw [rededge] (yk+3) .. controls (0,14) .. (dots);
        \draw [blueedge] (xj)--(yk+2)--(x2) (y1)--(xj+1);
\end{tikzpicture}
\caption{The case where the edge $x_{k+3}y_{k+1}$ is red.}
\label{fig-xk+3yk+1}
\end{figure}
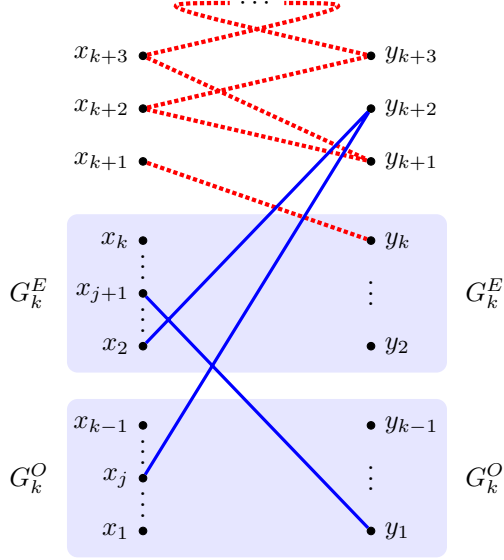

\begin{clm}\label{clm-GSk+3-A}
The subgraph $G[\Sleq{k+3}]$ has a blue Hamiltonian cycle that uses the edges $x_{k+2}y_k$ and $x_{k+3}y_{k+3}$. Consequently, the edge $x_{k+4}y_{k+4}$ is blue.
\end{clm}

\begin{proof}
By \Cref{clm-GSk+2-A}, start with a blue Hamiltonian cycle $C$ of $G[\Sleq{k+2}]$ that uses the edges $x_{k+1}y_{k+1}$ and $x_{k+2}y_k$. \Cref{clm-GSk+2-A} also tells us that $x_{k+3}y_{k+3}$ is blue. Now, extend $C$ by replacing the edge $x_{k+1}y_{k+1}$ by the path $x_{k+1}y_{k+3}x_{k+3}y_{k+1}$ (using Claims \ref{clm-xk+1yk+3-A} and \ref{clm-xk+3yk+1-A}). This gives the desired blue Hamiltonian cycle of $G[\Sleq{k+3}]$. By \Cref{rmk-xiyixiyi+2}(a), it follows that the edge $x_{k+4}y_{k+4}$ is blue.
\end{proof}

\begin{clm}\label{clm-xk+4ykxk+2yk+4-A}
The edges $x_{k+4}y_k$ and $x_{k+2}y_{k+4}$ are blue.
\end{clm}

\begin{proof}
Suppose for sake of contradiction that the edge $x_{k+4}y_k$ is red (see Figure~\ref{fig-xk+4yk}). Hence, by Claim \ref{clm-GSk-1xkyk+1-A}, we see that $G$ can be vertex-partitioned into the red cycle $(x_{k+1},y_{k+2},x_{k+3},y_{k+4}Px_{k+4},y_k)$, the edge-cycle $(x_{k+2},y_{k+3})$ and a blue cycle that covers all vertices in $\Sleq{k+1}$ except $x_{k+1}$ and $y_k$, a contradiction. Thus, the edge $x_{k+4}y_k$ is blue.

\begin{figure}[!ht]
\centering
\begin{subfigure}[!ht]{0.4\textwidth}%
\centering
\begin{tikzpicture}[yscale=0.8, xscale=1.0]
        \fill [rounded corners, fill=blue!10] (-1,4.5) rectangle (4,6.5);
        \fill [rounded corners, fill=blue!10] (-1,6.5) rectangle (2,7.5);
        \fill [rounded corners, fill=blue!10] (1,7.5) rectangle (4,8.5);
        \fill [rounded corners, fill=blue!10] (-1,6) rectangle (2,7);
        \fill [rounded corners, fill=blue!10] (0,6) rectangle (3,8);
        \fill [rounded corners, fill=blue!0] (-1,7.5) rectangle (1,8.5);
        \fill [rounded corners, fill=blue!0] (2,6.5) rectangle (4,7.5);

        \node [vertex, label={[label distance=0cm]180:$x_1$}] (x1) at (0,5) {};
        \node at (0,5.5) {$\myvdots$};
        \node [vertex, label={[label distance=0cm]180:$x_{k-1}$}] (xk-1) at (0,6) {};

        \node [vertex, label={[label distance=0cm]180:$x_k$}] (xk) at (0,7) {};

        \node [vertex, label={[label distance=0cm]180:$x_{k+1}$}] (xk+1) at (0,8) {};
        \node [vertex, label={[label distance=0cm]180:$x_{k+2}$}] (xk+2) at (0,9) {};
        \node [vertex, label={[label distance=0cm]180:$x_{k+3}$}] (xk+3) at (0,10) {};
        \node [vertex, label={[label distance=0cm]180:$x_{k+4}$}] (xk+4) at (0,11) {};

        \node (dots) at (1.5,12) {$\cdots$};

        \node [vertex, label={[label distance=0cm]0:$y_1$}] (y1) at (3,5) {};
        \node at (3,5.5) {$\myvdots$};
        \node [vertex, label={[label distance=0cm]0:$y_{k-1}$}] (yk-1) at (3,6) {};

        \node [vertex, label={[label distance=0cm]0:$y_k$}] (yk) at (3,7) {};

        \node [vertex, label={[label distance=0cm]0:$y_{k+1}$}] (yk+1) at (3,8) {};
        \node [vertex, label={[label distance=0cm]0:$y_{k+2}$}] (yk+2) at (3,9) {};
        \node [vertex, label={[label distance=0cm]0:$y_{k+3}$}] (yk+3) at (3,10) {};
        \node [vertex, label={[label distance=0cm]0:$y_{k+4}$}] (yk+4) at (3,11) {};

        \draw [rededge] (xk+4)--(yk)--(xk+1)--(yk+2)--(xk+3)--(yk+4) (xk+2)--(yk+3);
        \draw [rededge] (xk+4) .. controls (3,12) .. (dots);
        \draw [rededge] (yk+4) .. controls (0,12) .. (dots);
\end{tikzpicture}
\caption{When $x_{k+4}y_k$ is red.}
\label{fig-xk+4yk}
\end{subfigure}%
\begin{subfigure}[!ht]{0.59\textwidth}
\centering
\begin{tikzpicture}[yscale=0.7, xscale=1.0]
\node at (-1.5,5) {$G_k^O$};
\node at (4.5,5) {$G_k^O$};
\fill [rounded corners, fill=blue!10] (-1,3.5) rectangle (4,6.5);

\node at (-1.5,8.5) {$G_k^E$};
\node at (4.5,8.5) {$G_k^E$};
\fill [rounded corners, fill=blue!10] (-1,7) rectangle (4,10);

\node [vertex, label={[label distance=0cm]180:$x_1$}] (x1) at (0,4) {};
\node at (0,4.5) {$\myvdots$};
\node [vertex, label={[label distance=0cm]180:$x_j$}] (xj) at (0,5) {};
\node at (0,5.5) {$\myvdots$};
\node [vertex, label={[label distance=0cm]180:$x_{k-1}$}] (xk-1) at (0,6) {};

\node [vertex, label={[label distance=0cm]180:$x_2$}] (x2) at (0,7.5) {};
\node at (0,8) {$\myvdots$};
\node [vertex, label={[label distance=0cm]180:$x_{j+1}$}] (xj+1) at (0,8.5) {};
\node at (0,9) {$\myvdots$};
\node [vertex, label={[label distance=0cm]180:$x_k$}] (xk) at (0,9.5) {};

\node [vertex, label={[label distance=0cm]180:$x_{k+1}$}] (xk+1) at (0,11) {};
\node [vertex, label={[label distance=0cm]180:$x_{k+2}$}] (xk+2) at (0,12) {};
\node [vertex, label={[label distance=0cm]180:$x_{k+3}$}] (xk+3) at (0,13) {};
\node [vertex, label={[label distance=0cm]180:$x_{k+4}$}] (xk+4) at (0,14) {};

\node (dots) at (1.5,15) {$\cdots$};

\node [vertex, label={[label distance=0cm]0:$y_1$}] (y1) at (3,4) {};
\node at (3,5) {$\myvdots$};
\node [vertex, label={[label distance=0cm]0:$y_{k-1}$}] (yk-1) at (3,6) {};

\node [vertex, label={[label distance=0cm]0:$y_2$}] (y2) at (3,7.5) {};
\node at (3,8.5) {$\myvdots$};
\node [vertex, label={[label distance=0cm]0:$y_k$}] (yk) at (3,9.5) {};

\node [vertex, label={[label distance=0cm]0:$y_{k+1}$}] (yk+1) at (3,11) {};
\node [vertex, label={[label distance=0cm]0:$y_{k+2}$}] (yk+2) at (3,12) {};
\node [vertex, label={[label distance=0cm]0:$y_{k+3}$}] (yk+3) at (3,13) {};
\node [vertex, label={[label distance=0cm]0:$y_{k+4}$}] (yk+4) at (3,14) {};

\draw [rededge] (xk+4)--(yk+3)--(xk+2)--(yk+4);
\draw [rededge] (xk+4) .. controls (3,15) .. (dots);
\draw [rededge] (yk+4) .. controls (0,15) .. (dots);
\draw [blueedge] (xj)--(yk+2)--(x2) (x1)--(yk+1)--(xk+1)--(y1)--(xj+1);
\end{tikzpicture}
\caption{When $x_{k+2}y_{k+4}$ is red.}
\label{fig-xk+2yk+4}
\end{subfigure}%
\caption{The cases of \Cref{clm-xk+4ykxk+2yk+4-A}.}
\end{figure}

Now, suppose towards a contradiction that the edge $x_{k+2}y_{k+4}$ is red (see Figure~\ref{fig-xk+2yk+4}). By \Cref{clm-xj+1y1x2yj+2} and \Cref{clm-x2yk+2} the edges $x_{j+1}y_1$ and $x_2y_{k+2}$ are blue. Together with \Cref{clm-xk+1y1} we build the blue cycle
$$C = x_{j+1} \To[E_k\setminus y_k] x_2 \To y_{k+2}\To x_j \To[O_k\setminus y_1] x_1 \To y_{k+1} \To x_{k+1} \To y_1 \To x_{j+1}.$$
$C$ covers $(\Sleq{k+1}\setminus y_k) \cup \{y_{k+2}\} $.
Thus $G$ can be partitioned into the red cycle $(x_{k+2},y_{k+4}Px_{k+4},y_{k+3})$, the edge-cycle $(x_{k+3},y_k)$ and $C$. Hence, we get a contradiction. Thus, the edge $x_{k+2}y_{k+4}$ is blue.
\end{proof}

\begin{clm}\label{clm-GSk+4-A}
The subgraph $G[\Sleq{k+4}]$ has a blue Hamiltonian cycle that uses the edges $x_{k+3}y_{k+3}$ and $x_{k+4}y_{k+4}$.
\end{clm}

\begin{proof}
Using \Cref{clm-GSk+3-A}, take a blue Hamiltonian cycle of $G[\Sleq{k+3}]$ that uses the edges $x_{k+2}y_k$ and $x_{k+3}y_{k+3}$ and extend it to a blue Hamiltonian cycle of $G[\Sleq{k+4}]$ that uses the edges $x_{k+3}y_{k+3}$ and $x_{k+4}y_{k+4}$ by replacing the edge $x_{k+2}y_k$ by the path $(x_{k+2},y_{k+4},x_{k+4},y_k)$, which is blue by Claims~\ref{clm-GSk+3-A} and \ref{clm-xk+4ykxk+2yk+4-A}.
\end{proof}

Thus, by Claims \ref{clm-GSk+2-A}, \ref{clm-GSk+3-A} and \ref{clm-GSk+4-A}, we see that $\Sleq{k+4}$ is a blue special set of $G$, which is our final contradiction for Case A.

\begin{center}
\textbf{Case B:} $k$ is odd and $k\ge 4$, therefore $j$ is even with $2 \leq j < k \leq n-2$.
\end{center}

Most proofs are similar to the ones in Case A. We will give the details on how to derive the A proofs to B proofs. Of course, we should also replace any reference to a Claim A to its Claim B counterpart.
\stepcounter{case}

\begin{clm}\label{clm-x2ykxk-1y1}\mbox{}
\begin{enumerate}[(i)] 
         \item \label{clm-x2ykxk-1y1-itemi} If there exists $1 \leq l \leq k$ odd such that $x_{k+1}y_l$ is red, then $x_2y_k$ and $x_{k-1}y_1$ are not both blue.
         \item \label{clm-x2ykxk-1y1-itemii} If there exists $1 \leq l \leq k$ odd such that $x_{k+1}y_l$ is red, then $x_ky_2$ and $x_1y_{k-1}$ are not both blue.
\end{enumerate}
\end{clm}

\begin{proof}
Interchange the roles of $x_1$ and $x_2$, of $y_1$ and $y_2$ and of $G_k^O$ and $G_k^E$ in the proof of Claim \ref{clm-x1ykxk-1y2}.
\end{proof}

\begin{clm}\label{clm-xk+1y2}
The edges $x_{k+1}y_2$ and $x_2y_{k+1}$ are blue.
\end{clm}

\begin{proof}
Suppose towards a contradiction that the edge $x_{k+1}y_2$ is red (see Figure~\ref{fig-xk+1y2}). In this case, $G$ can be vertex-partitioned into the red cycle $(x_{k+1}Py_2)$, the red path $(y_1,x_2Px_{k-1},y_k)$ and the vertex~$x_1$ (a difference here is that in \Cref{clm-xk+1y1} we did not have this hanging vertex $x_1$).  By Claim \ref{clm-x2ykxk-1y1}(\ref{clm-x2ykxk-1y1-itemi}), we know that the edges $x_2y_k$ and $x_{k-1}y_1$ are not both blue. Hence, we see that $G$ can be vertex-partitioned into $3$ monochromatic cycles, a contradiction. Thus, the edge $x_{k+1}y_2$ is blue. Analogously, swapping the roles of $X$ and $Y$ and using \Cref{clm-x2ykxk-1y1}(\ref{clm-x2ykxk-1y1-itemii}), the edge $x_2y_{k+1}$ is blue too. Thus, the result follows.

\begin{figure}[!ht]
\centering
\begin{tikzpicture}[scale=1.0]
\node [vertex, label={[label distance=0cm]270:$x_1$}] (x1) at (-1,0) {};
\node [vertex, label={[label distance=0cm]270:$y_2$}] (y2) at (0,0) {};
\node (dotsn) at (2.5,0) {$\cdots$};
\node [vertex, label={[label distance=0cm]270:$x_{k+1}$}] (xk+1) at (5,0) {};
\node [vertex, label={[label distance=0cm]270:$y_k$}] (yk) at (6,0) {};
\node [vertex, label={[label distance=0cm]270:$x_{k-1}$}] (xk-1) at (7,0) {};
\node (dotsy) at (8,0) {$\cdots$};
\node [vertex, label={[label distance=0cm]270:$x_2$}] (x2) at (9,0) {};
\node [vertex, label={[label distance=0cm]270:$y_1$}] (y1) at (10,0) {};

\draw [rededge] (x1)--(y2)--(dotsn)--(xk+1)--(yk)--(xk-1)--(dotsy)--(x2)--(y1);
\draw [rededge] (xk+1) to [out=150, in=30] (y2);
\end{tikzpicture}
\caption{The case where the edge $x_{k+1}y_2$ is red.}
\label{fig-xk+1y2}
\end{figure}
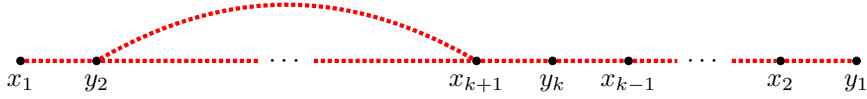
\end{proof}

\vspace{0.3cm}
\begin{clm}\label{clm-xk+2yk+2-B}
The edge $x_{k+2}y_{k+2}$ is blue.
\end{clm}

\begin{proof}
Just interchange the roles of $x_1$ and $x_2$, of $y_1$ and $y_2$ and of $G_k^O$ and $G_k^E$.
\end{proof}

As in Case A, from this point, without loss of generality
        \textbf{we assume  that the edge $x_{k+1}y_j$ is red.}
By \Cref{clm-xk+1y2} we know that $j \neq 2$. And as in Case A, if $k = n-2$, then the proof ends here: since $G$ is a red zigzag graph, $x_ny_n$ is red, a contradiction to \Cref{clm-xk+2yk+2-B}. Hence from now:  \[
        2 < j \leq k \le n-3
\]

\begin{clm}\label{clm-xj+1y2x1yj+2}
The edges $x_{j+1}y_2$ and $x_1y_{j+2}$ are blue. In particular, when $j=k-1$, this means that $x_ky_2$ and $x_1y_{k+1}$ are blue.
\end{clm}

\begin{proof}
This proof is similar to \Cref{clm-xj+1y1x2yj+2}, with minor changes. First, to prove that the edge  $x_{j+1}y_2$ is blue, we do as in \Cref{clm-xk+1y2}, interchanging the roles of $x_1$ and $x_2$, and of $y_1$ and $y_2$.

Now, suppose for sake of contradiction that the edge $x_1y_{j+2}$ is red (see Figure~\ref{fig-x1yj+2}). Then, $G$ can be vertex-partitioned into the red cycle $(x_{k+1}Py_{j+2},x_1Py_j)$, the red path $(y_1,x_2Px_{k-1},y_k)$ and the vertex $x_{j+1}$. By Claim \ref{clm-x2ykxk-1y1}, we know that the edges $x_2y_k$ and $x_{k-1}y_1$ are not both blue. Hence, $G$ can be vertex-partitioned into $3$ monochromatic cycles, a contradiction. Thus, the edge $x_1y_{j+2}$ is blue.

\begin{figure}[!ht]
\centering
\begin{tikzpicture}[scale=0.96]
\node [vertex, label={[label distance=0cm]270:$x_1$}] (x1) at (0,0) {};
\node (dotsx) at (1,0) {$\cdots$};
\node [vertex, label={[label distance=0cm]270:$y_j$}] (yj) at (2,0) {};
\node [vertex, label={[label distance=0cm]270:$x_{j+1}$}] (xj+1) at (3,0) {};
\node [vertex, label={[label distance=0cm]270:$y_{j+2}$}] (yj+2) at (4,0) {};
\node (dotsn) at (5.5,0) {$\cdots$};
\node [vertex, label={[label distance=0cm]270:$x_{k+1}$}] (xk+1) at (7,0) {};
\node [vertex, label={[label distance=0cm]270:$y_k$}] (yk) at (8,0) {};
\node [vertex, label={[label distance=0cm]270:$x_{k-1}$}] (xk-1) at (9,0) {};
\node (dotsy) at (10,0) {$\cdots$};
\node [vertex, label={[label distance=0cm]270:$x_2$}] (x2) at (11,0) {};
\node [vertex, label={[label distance=0cm]270:$y_1$}] (y1) at (12,0) {};

\draw [rededge] (x1)--(dotsx)--(yj)--(xj+1)--(yj+2)--(dotsn)--(xk+1)--(yk)--(xk-1)--(dotsy)--(x2)--(y1);
\draw [rededge] (yj+2) to [out=150, in=30] (x1);
\draw [rededge] (xk+1) to [out=150, in=30] (yj);

\end{tikzpicture}
\caption{The case where the edge $x_1y_{j+2}$ is red.}
\label{fig-x1yj+2}
\end{figure}
\end{proof}

\begin{clm}\label{clm-xjyk+2xk+2yk-B}
The edges $x_jy_{k+2}$ and $x_{k+2}y_k$ are blue.
\end{clm}

\begin{proof}
To show that $x_jy_{k+2}$ is blue, use the proof of \Cref{clm-xjyk+2xk+2yk-A} as is (see Figure~\ref{fig-xjyk+2}). Then to show that $x_{k+2}y_k$ is blue, do as in Claim~\ref{clm-xjyk+2xk+2yk-A}, interchanging the roles of $x_1$ and $x_2$, of $y_1$ and $y_2$, of $O_k$ and $E_k$, and of $G_k^O$ and $G_k^E$.
\end{proof}

\begin{clm}\label{clm-GSk+2-B}
The subgraph $G[\Sleq{k+2}]$ has a blue Hamiltonian cycle that uses the edges $x_{k+1}y_{k+1}$, $x_{k+2}y_{k+2}$ and $x_{k+2}y_k$. Furthermore, the edge $x_{k+3}y_{k+3}$ is blue.
\end{clm}

\begin{proof}
Interchange the roles of $x_1$ and $x_2$, of $y_1$ and $y_2$ and of $G_k^O$ and $G_k^E$ in the proof of \Cref{clm-GSk+2-A}.
\end{proof}

As in Case A, if $k = n-3$ then the proof ends here: $G$ is a red zigzag graph, so $x_ny_n$ is red, a contradiction to \Cref{clm-GSk+2-B}. Hence, in the remaining claims we have
\[
        k \leq n-4.
\]

\begin{clm}\label{clm-xjyk+1-B}
The edge $x_jy_{k+1}$ is blue.
\end{clm}

\begin{proof}
Interchange the roles of $x_1$ and $x_2$, of $y_1$ and $y_2$ and of $G_k^O$ and $G_k^E$  in the proof of \Cref{clm-xjyk+1-A}.
\end{proof}

\begin{clm}\label{clm-GSk-1xkyk+1-B}
There is a blue cycle in $G$ that covers all vertices in $\Sleq{k+1}$ except $x_{k+1}$ and $y_k$.
\end{clm}

\begin{proof}
Interchange the roles of $x_1$ and $x_2$, of $y_1$ and $y_2$ and of $G_k^O$ and $G_k^E$ in the proof of \Cref{clm-GSk-1xkyk+1-A}.
\end{proof}

\begin{clm}\label{clm-xk+1yk+3-B}
The edge $x_{k+1}y_{k+3}$ is blue.
\end{clm}

\begin{proof}
The proof of Claim \ref{clm-xk+1yk+3-A} can be used as is.
\end{proof}

\begin{clm}\label{clm-x2ykxk+1y1}
The edges $x_2y_k$ and $x_{k+1}y_1$ are not both blue.
\end{clm}

\begin{proof}
Interchange the roles of $x_1$ and $x_2$, of $y_1$ and $y_2$ and of $G_k^O$ and $G_k^E$  in the proof of \Cref{clm-x1ykxk+1y2}.
\end{proof}

\begin{clm}\label{clm-x1yk+2}
The edge $x_1y_{k+2}$ is blue.
\end{clm}

\begin{proof}
Suppose towards a contradiction that the edge $x_1y_{k+2}$ is red (see Figure~\ref{fig-x1yk+2}). In this case, $G$ can be vertex-partitioned into the red cycle $(x_1Py_{k+2})$ and the red path $(x_{k+1},y_kPx_2,y_1)$. By Claim \ref{clm-x2ykxk+1y1}, we know that the edges $x_2y_k$ and $x_{k+1}y_1$ are not both blue. Hence, we see that $G$ can be vertex-partitioned into at most $3$ monochromatic cycles, a contradiction. 

\begin{figure}[!ht]
\centering
\begin{tikzpicture}[scale=1.0]
\node [vertex, label={[label distance=0cm]270:$x_1$}] (x1) at (-1,0) {};
\node (dotsn) at (2,0) {$\cdots$};
\node [vertex, label={[label distance=0cm]270:$y_{k+2}$}] (yk+2) at (5,0) {};
\node [vertex, label={[label distance=0cm]270:$x_{k+1}$}] (xk+1) at (6,0) {};
\node [vertex, label={[label distance=0cm]270:$y_k$}] (yk) at (7,0) {};
\node (dotsy) at (8,0) {$\cdots$};
\node [vertex, label={[label distance=0cm]270:$x_2$}] (x2) at (9,0) {};
\node [vertex, label={[label distance=0cm]270:$y_1$}] (y1) at (10,0) {};

\draw [rededge] (x1)--(dotsn)--(yk+2)--(xk+1)--(yk)--(dotsy)--(x2)--(y1);
\draw [rededge] (yk+2) to [out=150, in=30] (x1);

\end{tikzpicture}
\caption{The case where the edge $x_1y_{k+2}$ is red.}
\label{fig-x1yk+2}
\end{figure}
\end{proof}

\begin{clm}\label{clm-xk+3yk+1-B}
The edge $x_{k+3}y_{k+1}$ is blue.
\end{clm}

\begin{proof}
Interchange the roles of $x_1$ and $x_2$, of $y_1$ and $y_2$ and of $G_k^O$ and $G_k^E$ in the proof of Claim \ref{clm-xk+3yk+1-A}.
\end{proof}

\begin{clm}\label{clm-GSk+3-B}
The subgraph $G[\Sleq{k+3}]$ has a blue Hamiltonian cycle that uses the edges $x_{k+2}y_k$ and $x_{k+3}y_{k+3}$. Consequently, the edge $x_{k+4}y_{k+4}$ is blue.
\end{clm}

\begin{proof}
The proof of Claim \ref{clm-GSk+3-A} can almost be used as is: just replace Claim \ref{clm-GSk+2-A} by \ref{clm-GSk+2-B}.
\end{proof}

\begin{clm}\label{clm-xk+4ykxk+2yk+4-B}
The edges $x_{k+4}y_k$ and $x_{k+2}y_{k+4}$ are blue.
\end{clm}

\begin{proof}
Interchange the roles of $x_1$ and $x_2$, of $y_1$ and $y_2$ and of $G_k^O$ and $G_k^E$ in the proof of Claim \ref{clm-xk+4ykxk+2yk+4-A}.
\end{proof}

\begin{clm}\label{clm-GSk+4-B}
The subgraph $G[\Sleq{k+4}]$ has a blue Hamiltonian cycle that uses the edges $x_{k+3}y_{k+3}$ and $x_{k+4}y_{k+4}$.
\end{clm}

\begin{proof}

We can use the proof of Claim \ref{clm-GSk+4-A} almost as is: just use \Cref{clm-GSk+3-B} instead of \Cref{clm-GSk+3-A}.
\end{proof}

Thus, by Claims \ref{clm-GSk+2-B}, \ref{clm-GSk+3-B} and \ref{clm-GSk+4-B}, we see that $\Sleq{k+4}$ is a blue special set of $G$, which is our final contradiction for Case B.

This ends the proof of \Cref{lmm-Knn-withpath-evenplait}.
\end{proof}

Now, we may finally prove Lemma \ref{lmm-Knnwithpath-3cycles}.

\begin{proof}[Proof of \Cref{lmm-Knnwithpath-3cycles}]
Let us take $G$ as in the statement: a balanced complete bipartite graph whose edges are coloured red or blue, and which has a monochromatic Hamiltonian path. Without loss of generality, we can assume this path is red and that its vertices are labelled so that it is a red zigzag graph. Suppose towards a contradiction that $G$ cannot be partitioned into three monochromatic cycles. Let the number of vertices of $G$ be $2n$.

By Remark \ref{rmk-xiyixiyi+2}(a), we may assume that the edges $x_1y_1$ and $x_2y_2$ are blue, since otherwise we would be done. Thus, the subgraph $G[\Sleq{2}]$ is a blue even plait and so we may apply Lemma~\ref{lmm-Knn-withpath-evenplait} iteratively until $k = n-2$ inclusive. Therefore $\Sleq{n-1}$ is a blue even plait, hence it can be decomposed into two blue cycles $C_E$ and $C_O$ by \Cref{rmk-evenplait-2cycles}. Finally, $G$ can be decomposed into the red edge-cycle $x_ny_n$ and the two blue cycles $C_E$ and $C_O$, which is a contradiction and concludes the proof.
\end{proof}

\section{Conclusions and perspectives}
In this paper, we proved a new bound on the number of monochromatic cycles needed to partition the vertices of any 2-edge-coloured complete balanced bipartite graph. We made quite an improvement as the previous known bound was 12 and we reduced it to 4. Also we can point out that our work concerns all graphs, while many papers tackling similar problems focus on sufficiently large graphs.
Working on all graphs makes it possible to not resort to strong results (such as the regularity lemma). Our work is self-contained apart from Stein's result (\Cref{cor-Knn-pathcycle}) for which we also had an alternate but very similar proof, hence not worth being shown here.

We wrote the proof of our main result in a very detailed way, with the goal to make it easy to be verified. So it ended up being somewhat long. Of course, one can make it shorter (but harder to check) by omitting some trivial details and remarks or not explicitly listing all cycles (the reader is free to skip these details and search for the cycles in the pictures). We could also have omitted the ``B claims'' (when $k$ is odd), asking the reader to believe that the proofs are similar to the ones of the case when $k$ is even. However, we did not find a way to make the analysis intrinsically shorter. A good reason for this is that many of those cases have to be checked even to deal with small values of $k$ (say $k\le 10$).

We also showed that if the colouring is split, then we can partition the vertices into at most three monochromatic cycles and described exactly when 2 cycles are enough (and when they are not). Besides, we gave examples of a $2$-colouring of a complete balanced bipartite graph that is not split and also cannot be partitioned into 2 cycles. Therefore, it remains to decide if the 2-colour cycle partition number of balanced bipartite graphs is 3 or 4.


\bibliographystyle{acm}
\bibliography{Bibliography}
\end{document}